\theoremstyle{plain}
\newtheorem{theorem}{Theorem}
\newtheorem{lemma}{Lemma}
\newtheorem{corollary}{Corollary}
\theoremstyle{definition}
\newtheorem{example}{Example}
\theoremstyle{remark}
\newtheorem*{remark}{Remark}
\tikzset{
    edge/.style={-{Latex[scale=1.7]}},
    dedge/.style={{Latex[scale=1.7]}-{Latex[scale=1.7]}},
}
\title{Assigned rational functions of a rooted tree}
\author{Ivan Damnjanovi\'c\thanks{The author is supported by Diffine LLC.}\\
\small University of Ni\v s, Faculty of Electronic Engineering\\[-0.4ex]
\small\tt ivan.damnjanovic@elfak.ni.ac.rs\\
\small Diffine LLC\\[-0.4ex]
\small\tt ivan@diffine.com}
\begin{document}

\maketitle

\begin{abstract}
    We investigate the spectral properties of rooted trees with the intention of improving the currently existing results that deal with this matter. The concept of an assigned rational function is recursively defined for each vertex of a rooted tree. Afterwards, two mathematical formulas are given which show how the characteristic polynomials of the adjacency and Laplacian matrix can be represented as products of the aforementioned rational functions. In order to demonstrate their general use case scenario, the obtained formulas are subsequently implemented on balanced trees, with a special focus on the Bethe trees. In the end, some of the previously derived results are used in order to construct a tree merging procedure which preserves the spectra of all of the starting trees.

\bigskip\noindent
{\bf Mathematics Subject Classification:} 05C50, 05C05.\\
{\bf Keywords:} Rooted tree, Balanced tree, Bethe tree, Characteristic polynomial, Spectrum, Adjacency matrix, Laplacian matrix, Rational function, Recursion, Tree merging.
\end{abstract}

\section{Introduction}\label{sc_introduction}

Let $G$ be a simple graph. We will use $V(G)$ and $n(G)$ to represent the vertex set and order of this graph, respectively. Moreover, we will denote the degree of each vertex $v \in V(G)$ by $d(v)$. Also, we will signify the adjacency and Laplacian matrix of the graph $G$ by $A(G)$ and $L(G)$, respectively. Here, we assume that the rows and columns of $A(G)$ and $L(G)$ correspond to the vertices $u_1, u_2, \ldots, u_{n(G)} \in V(G)$, in this order. We shall denote the characteristic polynomials of these two matrices by $P(G, x) = \det(xI - A(G))$ and $Q(G, x) = \det(xI - L(G))$. Also, we will use $\sigma(G)$ to signify the spectrum of $G$, regarded as a multiset composed of the eigenvalues of $A(G)$, as well as $\sigma^*(G)$ to signify the set of all the distinct eigenvalues of $A(G)$. Finally, we shall use $\mathcal{E}(G)$ in order to denote the energy of the graph $G$. Here, the graph energy represents the sum of absolute values of all the eigenvalues of $A(G)$, as introduced by Gutman in \cite{gutman_energy}.

We know that a tree represents a simple graph which is both acyclic and connected. Let $T$ be a rooted tree whose root is denoted by $r(T)$. We will enumerate the levels of $T$ by $1, 2, 3, \ldots, l(T)$ so that $r(T)$ is located on level $1$ and $l(T) - 1$ represents the eccentricity of $r(T)$. Also, we will use $V(T, j)$ and $n(T, j)$ to signify the set of all of the vertices located on level $j$ and their total number, respectively, for all the $1 \le j \le l(T)$. For convenience, we will also define $n(T, 0) = 0$. \linebreak Finally, we shall use $c(v)$ to denote the set of all of the children of some given vertex $v \in V(T)$. 

A balanced tree is a rooted tree such that all of the vertices on the same level have an equal degree. It is clear that all of its leaves must be on the last level $l(T)$. We will consider the Bethe tree $\mathcal{B}_{d, k}$ to be the balanced tree such that
\begin{itemize}
    \item $l(\mathcal{B}_{d, k}) = k$;
    \item each vertex has $d-1$ children, besides the vertices on the last level, which obviously have none;
\end{itemize}
as defined by Heilmann and Lieb in \cite{heilmann_lieb}. In this paper, we also define the anti-factorial tree $\mathcal{A}_k$ to be the balanced tree such that
\begin{itemize}
    \item $l(\mathcal{A}_k) = k$;
    \item each vertex on level $j$ has exactly $k-j$ children.
\end{itemize}

At a general level, this paper deals with the spectral properties of rooted trees. The central results we have obtained are stated in the following two corollaries.
\begin{corollary}\label{adj_cor}
    Let $T$ be an arbitrary rooted tree. If we recursively assign a rational function $\mathcal{G}(v, x) \in \mathbb{Z}(x)$ to each of its vertices $v \in V(T)$ by using the expression
    \begin{align}\label{adj_cor_rec}
        \mathcal{G}(v, x) &= x - \sum_{w \in c(v)} \dfrac{1}{\mathcal{G}(w, x)} \qquad (\forall v \in V(T)) ,
    \end{align}
    then
    \begin{align}\label{adj_cor_res}
        P(T, x) &= \prod_{v \in V(T)} \mathcal{G}(v, x) .
    \end{align}
\end{corollary}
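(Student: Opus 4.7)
The plan is to establish a stronger statement by induction: for every vertex $v \in V(T)$, the characteristic polynomial of the subtree $T_v$ rooted at $v$ satisfies
\[
    P(T_v, x) = \prod_{u \in V(T_v)} \mathcal{G}(u, x).
\]
The corollary is then the specialization at $v = r(T)$. I would proceed by strong induction on $|V(T_v)|$. The base case, when $v$ is a leaf, is immediate: $T_v$ consists of a single vertex, so $P(T_v, x) = x$, while the recursion gives $\mathcal{G}(v, x) = x$ because $c(v) = \emptyset$.

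For the inductive step, let $v$ have children $w_1, \ldots, w_k$, and order the vertices of $T_v$ so that $v$ comes first and the vertices of each $T_{w_i}$ occupy a contiguous block with $w_i$ at its head. Writing $M_i = xI - A(T_{w_i})$, the matrix $xI - A(T_v)$ becomes a bordered block-diagonal matrix: a single leading diagonal entry of $x$, entries $-1$ linking $v$ to each $w_i$, and the blocks $M_1, \ldots, M_k$ on the diagonal. Applying the Schur complement formula over the field $\mathbb{Z}(x)$ with the lower-right block as the invertible piece yields
\[
    P(T_v, x) = \left(\prod_{i=1}^{k} \det M_i\right) \left(x - \sum_{i=1}^{k} (M_i^{-1})_{w_i, w_i}\right).
\]

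The crucial step is identifying $(M_i^{-1})_{w_i, w_i}$. By the inductive hypothesis, $\det M_i = \prod_{u \in V(T_{w_i})} \mathcal{G}(u, x)$. By the cofactor formula for the inverse, $(M_i^{-1})_{w_i, w_i}$ equals $\det(\tilde M_i)/\det(M_i)$, where $\tilde M_i$ is $M_i$ with the row and column of $w_i$ removed. But $\tilde M_i$ is itself block-diagonal, with blocks $xI - A(T_{w'})$ as $w'$ ranges over $c(w_i)$, so a second application of the inductive hypothesis gives $\det(\tilde M_i) = \prod_{u \in V(T_{w_i}) \setminus \{w_i\}} \mathcal{G}(u, x)$. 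Consequently $(M_i^{-1})_{w_i, w_i} = 1/\mathcal{G}(w_i, x)$, the Schur complement factor collapses to precisely $\mathcal{G}(v, x)$, and the claimed identity for $T_v$ follows after one last product rearrangement.

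The only genuine obstacle is ensuring that the field-theoretic manipulations above are legitimate, which requires $\mathcal{G}(w_i, x)$ to be a nonzero element of $\mathbb{Z}(x)$ for every $i$, and hence $M_i$ to be invertible over this field. I would handle this with a short preliminary induction showing that, for every vertex $v$, $\mathcal{G}(v, x)$ behaves asymptotically like $x$ for large $x$; in particular it is never the zero rational function. This ensures that both the Schur complement formula and the cofactor identity may be applied without reservation throughout the recursion.
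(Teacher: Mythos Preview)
Your proof is correct, but it takes a genuinely different route from the paper's. The paper derives Corollary~\ref{adj_cor} as the specialization $\beta \equiv 0$ of a more general Theorem~\ref{main_theorem}, and proves that theorem by ordering the vertices \emph{level by level} (last level first), so that $xI - B_1(T)$ becomes block-tridiagonal; it then performs block Gaussian elimination one level at a time, using an auxiliary lemma to show that after the $j$-th elimination the new diagonal block is exactly the diagonal matrix with entries $\mathcal{F}(v,x)$ for $v$ on level $j$. Your argument instead orders the vertices \emph{subtree by subtree} (root first), obtaining a bordered block-diagonal matrix, and then peels off the root via a single Schur complement; the inductive hypothesis on smaller subtrees, applied once more through the cofactor formula, identifies $(M_i^{-1})_{w_i,w_i} = 1/\mathcal{G}(w_i,x)$ and closes the recursion.

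Both approaches ultimately rest on the same invertibility fact (your ``$\mathcal{G}(v,x)\sim x$'' observation is exactly the content of the paper's Lemma~\ref{lemma_start}). The paper's level-by-level elimination is more global and makes the block structure of the full matrix explicit, which is natural for the $B_1,B_2$ generalization it targets. Your subtree induction is more elementary and mirrors the recursive definition of $\mathcal{G}$ directly; it also yields the stronger intermediate statement $P(T_v,x)=\prod_{u\in V(T_v)}\mathcal{G}(u,x)$ for every subtree, which the paper does not state but which is useful on its own (and would, for instance, give Lemma~\ref{tree_construction_lemma} for free).
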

\begin{corollary}\label{lap_cor}
    Let $T$ be an arbitrary rooted tree. If we recursively assign a rational function $\mathcal{H}(v, x) \in \mathbb{Z}(x)$ to each of its vertices $v \in V(T)$ by using the expression
    \begin{align}\label{lap_cor_rec}
        \mathcal{H}(v, x) &= x - d(v) - \sum_{w \in c(v)} \dfrac{1}{\mathcal{H}(w, x)} \qquad (\forall v \in V(T)) ,
    \end{align}
    then
    \begin{align}\label{lap_cor_res}
        Q(T, x) &= \prod_{v \in V(T)} \mathcal{H}(v, x) .
    \end{align}
\end{corollary}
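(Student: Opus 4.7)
My plan is to mirror the Schur-complement argument underlying Corollary \ref{adj_cor}, with the single change that the diagonal of the matrix $M(x) := xI - L(T)$ now starts at $x - d(v)$ rather than at $x$. To begin, I would fix an ordering $v_1, \ldots, v_{n(T)}$ of the vertices in which every vertex precedes its parent (any bottom-up traversal, such as enumerating level $l(T)$ first, then level $l(T)-1$, and so on, will do). With this setup, in $M(x)$ the off-diagonal entry for each edge is $+1$ (since $L_{uw} = -1$ for adjacent $u, w$), the diagonal at $v$ is $x - d(v)$, and all remaining entries vanish.

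Next, I would successively eliminate $v_1, v_2, \ldots$ via Schur complements, performed inside the field $\mathbb{Z}(x)$. The key claim, maintained by induction on $i$, is that immediately before $v_i$ is eliminated, the current reduced matrix still carries $1$'s on all edges joining remaining vertices (and $0$'s on the non-edges), while the diagonal entry at any remaining $v_j$ equals $(x - d(v_j))$ minus $\sum \frac{1}{\mathcal{H}(w, x)}$ taken over those children $w$ of $v_j$ already processed. For $v_i$ itself this specialises to $\mathcal{H}(v_i, x)$ exactly, because all children of $v_i$ have been eliminated before it by choice of ordering.

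To confirm the inductive step, I would observe that eliminating $v_i$ modifies the entry at position $(v_j, v_k)$ by $-\frac{(M)_{v_j v_i}(M)_{v_i v_k}}{(M)_{v_i v_i}}$, which vanishes unless both $v_j$ and $v_k$ are remaining neighbors of $v_i$. The ordering guarantees that $v_i$'s only surviving neighbor is its parent $u$, so the sole effect of the elimination is to subtract $\frac{1}{\mathcal{H}(v_i, x)}$ from the diagonal entry at $u$, which is precisely the increment needed to preserve the invariant. Iterating the standard identity $\det M = M_{v_1 v_1} \det M^{(1)}$ then yields
\[
    Q(T, x) = \det M(x) = \prod_{i=1}^{n(T)} \mathcal{H}(v_i, x) = \prod_{v \in V(T)} \mathcal{H}(v, x).
\]

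The one place that needs care is checking that each pivot $\mathcal{H}(v, x)$ is a nonzero element of $\mathbb{Z}(x)$, so that the Schur complements are legitimate; this follows by a short induction from the recursion, since $\mathcal{H}(v, x)$ admits the asymptotic expansion $x - d(v) + O(x^{-1})$ as $|x| \to \infty$ and is therefore nonzero as a rational function. I expect this technicality, rather than any genuine algebraic difficulty, to be the main point one must be careful about in a rigorous write-up.
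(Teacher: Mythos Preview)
Your proof is correct and follows essentially the same Gaussian-elimination strategy as the paper. The paper derives Corollary~\ref{lap_cor} from Theorem~\ref{main_theorem} (taking $\beta(v)=d(v)$, so that $B_2(T)=L(T)$), and proves that theorem by block row-reduction of $xI-B_2(T)$ level by level; your vertex-by-vertex Schur complement is the scalar version of the same argument, and your asymptotic check that each $\mathcal{H}(v,x)$ is nonzero in $\mathbb{Z}(x)$ plays the role of the paper's Lemma~\ref{lemma_start} (which instead writes $\mathcal{H}(v,x)$ as a ratio of monic polynomials of consecutive degrees).
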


As we can see, Corollaries \ref{adj_cor} and \ref{lap_cor} provide a quick way of computing the characteristic polynomials of the adjacency and Laplacian matrix of rooted trees, as well as the corresponding spectra. The method is especially convenient when the given tree has a high degree of regularity in its structure, due to the fact that the computations which arise from Corollaries \ref{adj_cor} and \ref{lap_cor} do not rely on any kind of matrix manipulaton, but on dealing with certain polynomial sequences. This makes the above method an improvement over some of the previously used methods, such as the ones in \cite{rojo_1, rojo_2, heydari, bokhary_energy}.

The remainder of this paper is structured as follows. In Section \ref{sc_main}, we give the general definition of an assigned rational function corresponding to a vertex of a rooted tree. In Subsection \ref{subsc_main_results}, we state and provide a detailed proof of a theorem which explains how the characteristic polynomials of a wide array of matrices can be computed by implementing the aforementioned rational functions. Corollaries \ref{adj_cor} and \ref{lap_cor} will directly follow from this theorem. Subsection \ref{subsc_small} will serve to demonstrate how these two corollaries can be implemented in order to manually compute the $P(T, x)$ and $Q(T, x)$ polynomials of a given rooted tree. In order to avoid making the computations numerically tedious, there will be two provided examples which both contain a rooted tree of small order.

Subsequently, Section \ref{sc_balanced} will show how the two given corollaries can be used on potentially bigger rooted trees which have a regular structure. Subsection \ref{subsc_balanced_main} will revolve around computing the characteristic polynomials $P(T, x)$ and $Q(T, x)$ of balanced trees, as well as the corresponding set of distinct eigenvalues $\sigma^{*}(T)$. Afterwards, Subsection \ref{subsc_bethe} will rely on the previously derived results in order to investigate the spectral properties of Bethe trees. Here, we will determine the characteristic polynomial $P(\mathcal{B}_{d, k}, x)$, together with the set of distinct eigenvalues $\sigma^{*}(\mathcal{B}_{d, k})$ and the graph energy $\mathcal{E}(\mathcal{B}_{d, k})$, corresponding to each Bethe tree. Similarly, Subsection \ref{subsc_antifactorial} will deal with the spectral properties of anti-factorial trees. In this subsection we shall compute the characteristic polynomial $P(\mathcal{A}_k, x)$ and determine the set of distinct eigenvalues $\sigma^{*}(\mathcal{A}_k)$ for each such tree.

\newpage
Finally, in Section \ref{sc_merging} our goal will be to demonstrate a tree merging procedure which preserves the spectra of all of the starting rooted trees, thereby showing another potential use case of Corollary \ref{adj_cor}. This section will focus on giving an exact formulation of the aforementioned tree merging method, together with the formal mathematical proof of its validity.

\section{Assigned rational functions}\label{sc_main}
\subsection{Main results}\label{subsc_main_results}

Let $\beta(u_1), \beta(u_2), \ldots, \beta(u_{n(T)})$ be an arbitrarily chosen sequence of integers which correspond to the vertices $u_1, u_2, \ldots, u_{n(T)}$ of a rooted tree $T$, respectively. For such a given sequence, we will define two additional matrices
\begin{align}
    \label{b_1_formula}B_1(T) &= A(T) + \mathrm{diag}(\beta(u_1), \beta(u_2), \ldots, \beta(u_{n(T)})) ,\\
    \label{b_2_formula}B_2(T) &= -A(T) + \mathrm{diag}(\beta(u_1), \beta(u_2), \ldots, \beta(u_{n(T)})) .
\end{align}
In this paper, we introduce a recursive formula which helps compute the characteristic polynomials of the $B_1(T)$ and $B_2(T)$ matrices for any given $\beta$-sequence. This formula is based on assigning a rational function to each vertex of a rooted tree in the bottom-up manner, by using the rational functions previously assigned to its children. The corresponding theorem is given below.
\begin{theorem}\label{main_theorem}
    Let $T$ be any rooted tree with an arbitrarily chosen $\beta$-sequence. If we recursively assign a rational function $\mathcal{F}(v, x) \in \mathbb{Z}(x)$ to each of its vertices $v \in V(T)$ by using the expression
    \begin{align}\label{recursive}
        \mathcal{F}(v, x) &= x - \beta(v) - \sum_{w \in c(v)} \dfrac{1}{\mathcal{F}(w, x)} \qquad (\forall v \in V(T)) ,
    \end{align}
    then
    \begin{align}\label{main_formula}
        \det(xI - B_1(T)) = \det(xI - B_2(T)) &= \prod_{v \in V(T)} \mathcal{F}(v, x) .
    \end{align}
\end{theorem}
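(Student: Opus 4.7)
The first step is to collapse the two claimed equalities in \eqref{main_formula} into one. Since $T$ is bipartite, fix a proper $2$-coloring of $V(T)$ and let $D$ be the diagonal $\pm 1$ matrix whose sign on each vertex follows its color class. Then $D A(T) D = -A(T)$, while $D\,\mathrm{diag}(\beta)\,D = \mathrm{diag}(\beta)$, so $D B_1(T) D^{-1} = B_2(T)$. Hence $B_1(T)$ and $B_2(T)$ are similar and share a characteristic polynomial, and it suffices to prove $\det(xI - B_1(T)) = \prod_{v \in V(T)} \mathcal{F}(v,x)$.

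The main argument is a bottom-up induction on subtree size. For each $v \in V(T)$ write $T_v$ for the subtree rooted at $v$ and $P_v(x) := \det(xI - B_1(T_v))$; I would prove by induction on $|V(T_v)|$ that $P_v(x) = \prod_{u \in V(T_v)} \mathcal{F}(u,x)$. If $v$ is a leaf, then $B_1(T_v) = [\beta(v)]$ and the recursion gives $\mathcal{F}(v,x) = x - \beta(v)$, so both sides equal $x - \beta(v)$. For the inductive step, let $v$ have children $w_1, \ldots, w_k$ and order the vertices so that $v$ is last; since the only edges of $T_v$ incident with $v$ are the $v w_i$, we obtain the block form
\begin{equation*}
    xI - B_1(T_v) \;=\; \begin{pmatrix} M & -e \\ -e^{\mathsf{T}} & x - \beta(v) \end{pmatrix},
\end{equation*}
where $M$ is block diagonal with diagonal blocks $xI - B_1(T_{w_i})$ and $e$ is the indicator vector of $\{w_1, \ldots, w_k\}$. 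Working over the field $\mathbb{Q}(x)$, the Schur complement identity yields
\begin{equation*}
    P_v(x) \;=\; \det(M)\,\bigl((x - \beta(v)) - e^{\mathsf{T}} M^{-1} e\bigr) \;=\; \Bigl(\prod_{i=1}^{k} P_{w_i}(x)\Bigr)\Bigl((x - \beta(v)) - \sum_{i=1}^{k} [M^{-1}]_{w_i, w_i}\Bigr),
\end{equation*}
where the block-diagonal structure of $M$ kills the off-diagonal contributions in $e^{\mathsf{T}} M^{-1} e$. By Cramer's rule applied to the block containing $w_i$, the entry $[M^{-1}]_{w_i, w_i}$ equals $\prod_{w' \in c(w_i)} P_{w'}(x) / P_{w_i}(x)$, since removing the row and column of $w_i$ from $xI - B_1(T_{w_i})$ leaves the matrix of the forest $T_{w_i} \setminus \{w_i\}$, which is block diagonal with blocks $xI - B_1(T_{w'})$ for $w' \in c(w_i)$. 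Invoking the inductive hypothesis on $T_{w_i}$ and on each $T_{w'}$ rewrites this ratio as $1/\mathcal{F}(w_i, x)$, whereupon the bracketed factor becomes exactly $\mathcal{F}(v, x)$ as defined by \eqref{recursive}, and the leading product $\prod_i P_{w_i}(x)$ equals $\prod_{u \in V(T_v) \setminus \{v\}} \mathcal{F}(u, x)$, completing the induction.

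The main technical subtlety is well-definedness: the recursion, the Schur complement, and Cramer's rule all require that every $\mathcal{F}(v, x)$ and every $P_v(x)$ be nonzero in $\mathbb{Q}(x)$ (so that $M$ is genuinely invertible in $M_n(\mathbb{Q}(x))$ and no division by zero occurs in \eqref{recursive}). A short parallel induction settles this, since one may verify that each $\mathcal{F}(v, x)$, written as a reduced fraction $A(x)/B(x)$, always satisfies $\deg A = \deg B + 1$ with leading coefficient $1$, and consequently each $P_v(x)$ is a monic polynomial of degree $|V(T_v)|$ via the identity $P_v(x) = \mathcal{F}(v,x) \prod_i P_{w_i}(x)$. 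No other obstruction arises; all of the algebra takes place within the field $\mathbb{Q}(x)$.
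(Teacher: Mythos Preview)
Your argument is correct, but it follows a genuinely different route from the paper's. The paper orders the vertices level by level (deepest first), writes $xI - B_1(T)$ in block tridiagonal form, and then performs block Gaussian elimination one \emph{level} at a time: after proving (as its Lemma~\ref{second_lemma}) that the diagonal matrices $R_j = \mathrm{diag}(\mathcal{F}(v,x))_{v \in V(T,j)}$ satisfy $R_j = xI - C_j - D_{j+1}^T R_{j+1}^{-1} D_{j+1}$, it sweeps down the block rows to obtain an upper block triangular matrix with diagonal blocks $R_{l(T)}, \ldots, R_1$. Your approach instead peels off a single \emph{vertex} (the root) via the Schur complement and recurses on the subtrees, so the induction parameter is subtree size rather than level index. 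Both arguments ultimately rest on the same invertibility fact (your ``parallel induction'' is exactly the paper's Lemma~\ref{lemma_start}), but yours mirrors the recursive definition of $\mathcal{F}$ more directly and needs no level bookkeeping, while the paper's level-wise elimination makes the later applications to balanced trees (where all $\mathcal{F}$ on a level coincide) drop out more transparently. Your bipartite conjugation $D B_1 D^{-1} = B_2$ is also a cleaner way to identify the two determinants than the paper's ``the second half is proved absolutely analogously.''
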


First of all, it is clear how Theorem \ref{main_theorem} can be implemented in order to yield both Corollary \ref{adj_cor} and \ref{lap_cor}.

\bigskip
\noindent
{\em Proof of Corollary \ref{adj_cor}}.\quad If we set $\beta(v) = 0$ for all the $v \in V(T)$, we then get $B_1(T) = A(T)$, according to Eq.\ (\ref{b_1_formula}). This means that $P(T, x) = \det(xI - B_1(T))$. By comparing Eq.\ (\ref{adj_cor_rec}) to Eq.\ (\ref{recursive}), we conclude that Eq.\ (\ref{adj_cor_res}) immediately follows from Eq.\ (\ref{main_formula}). \qed

\bigskip
\noindent
{\em Proof of Corollary \ref{lap_cor}}.\quad By setting $\beta(v) = d(v)$ for all the $v \in V(T)$, we obtain $B_2(T) = L(T)$, according to Eq.\ (\ref{b_2_formula}). This implies $Q(T, x) = \det(xI - B_2(T))$. By comparing Eq.\ (\ref{lap_cor_rec}) to Eq.\ (\ref{recursive}), we see that Eq.\ (\ref{lap_cor_res}) follows directly from Eq.\ (\ref{main_formula}). \qed

\bigskip
In the remainder of this subsection, we will give a complete proof of Theorem~\ref{main_theorem}. In order to make the logical reasoning more concise and easier to follow, we will start off with some preliminary remarks, then state and prove two auxiliary lemmas which will help us finish the entire proof afterwards.

First of all, it is clear that the matrix $B(T)$ is real and symmetric. Given the fact that any permutation matrix $P \in \mathbb{R}^{n(T) \times n(T)}$ is orthogonal, the matrix $P^T B(T) P$ must also be real and symmetric, as well as similar to $B(T)$. This means that regardless of how we order the vertices of $T$, the corresponding matrix $B(T)$ will have the same characteristic polynomial. Without loss of generality, we will assume that the vertices $u_1, u_2, \ldots, u_{n(T)}$ which correspond to the rows and columns of $B(T)$, in this order, are such that the first $n(T, l(T))$ of them are all from level $l(T)$, then the following $n(T, l(T)-1)$ are all from level $l(T)-1$, and so on. Bearing this in mind, the matrix $A(T)$ obtains the tridiagonal block form
\begin{align*}
    A(T) &= \begin{bmatrix}
        O & D_{l(T)} & O & \cdots & O & O\\
        D_{l(T)}^T & O & D_{l(T)-1} & \cdots & O & O\\
        O & D_{l(T)-1}^T & O & \cdots & O & O\\
        \vdots & \vdots & \vdots & \ddots & \vdots & \vdots\\
        O & O & O & \cdots & O & D_2\\
        O & O & O & \cdots & D_2^T & O
    \end{bmatrix},
\end{align*}
where the $D_j \in \mathbb{Z}^{n(T, j) \times n(T, j-1)},\ j = \overline{2, l(T)}$ are all binary and have exactly one $1$ per row. This value of $1$ signifies which vertex from level $j-1$ is the unique parent of each vertex from level $j$.

Consequently, we get
\allowdisplaybreaks
\begin{align*}
    B_1(T) &= \begin{bmatrix}
        C_{l(T)} & D_{l(T)} & O & \cdots & O & O\\
        D_{l(T)}^T & C_{l(T)-1} & D_{l(T)-1} & \cdots & O & O\\
        O & D_{l(T)-1}^T & C_{l(T)-2} & \cdots & O & O\\
        \vdots & \vdots & \vdots & \ddots & \vdots & \vdots\\
        O & O & O & \cdots & C_2 & D_2\\
        O & O & O & \cdots & D_2^T & C_1
    \end{bmatrix},\\
    B_2(T) &= \begin{bmatrix}
        C_{l(T)} & -D_{l(T)} & O & \cdots & O & O\\
        -D_{l(T)}^T & C_{l(T)-1} & -D_{l(T)-1} & \cdots & O & O\\
        O & -D_{l(T)-1}^T & C_{l(T)-2} & \cdots & O & O\\
        \vdots & \vdots & \vdots & \ddots & \vdots & \vdots\\
        O & O & O & \cdots & C_2 & -D_2\\
        O & O & O & \cdots & -D_2^T & C_1
    \end{bmatrix},
\end{align*}
\interdisplaylinepenalty=10000
where the $C_j \in \mathbb{Z}^{n(T, j) \times n(T, j)},\ j = \overline{1, l(T)}$ matrices are all diagonal and such that the element $\beta(v)$ corresponds to the vertex $v \in V(T)$ in the appropriate matrix. This immediately leads us to
\begin{align}
    \nonumber\det(&xI - B_1(T)) =\\
    \label{det_b1}&= \begin{vmatrix}
        xI - C_{l(T)} & -D_{l(T)} & O & \cdots & O & O\\
        -D_{l(T)}^T & xI - C_{l(T)-1} & -D_{l(T)-1} & \cdots & O & O\\
        O & -D_{l(T)-1}^T & xI - C_{l(T)-2} & \cdots & O & O\\
        \vdots & \vdots & \vdots & \ddots & \vdots & \vdots\\
        O & O & O & \cdots & xI - C_2 & -D_2\\
        O & O & O & \cdots & -D_2^T & xI - C_1
    \end{vmatrix},
\end{align}
as well as
\begin{align}
    \nonumber\det(&xI - B_2(T)) =\\
    \label{det_b2}&= \begin{vmatrix}
        xI - C_{l(T)} & D_{l(T)} & O & \cdots & O & O\\
        D_{l(T)}^T & xI - C_{l(T)-1} & D_{l(T)-1} & \cdots & O & O\\
        O & D_{l(T)-1}^T & xI - C_{l(T)-2} & \cdots & O & O\\
        \vdots & \vdots & \vdots & \ddots & \vdots & \vdots\\
        O & O & O & \cdots & xI - C_2 & D_2\\
        O & O & O & \cdots & D_2^T & xI - C_1
    \end{vmatrix}.
\end{align}
Here, the matrices $xI - B_1(T)$ and $xI - B_2(T)$ are such that all of their elements are integer polynomials in $x$, i.e.\ members of the integral domain $\mathbb{Z}[x]$. From abstract algebra we know that the corresponding field of fractions of $\mathbb{Z}[x]$ is actually the field of rational functions with integer coefficients, i.e.\ $\mathbb{Z}(x)$. Henceforth we shall interpret all of the elements of $xI - B_1(T)$ and $xI - B_2(T)$ as members of $\mathbb{Z}(x)$, and thus view $xI - B_1(T)$ and $xI - B_2(T)$ as matrices over the field $\mathbb{Z}(x)$.

Although none of the positive degree polynomials are invertible in $\mathbb{Z}[x]$, they are all invertible in $\mathbb{Z}(x)$. This is the primary reason why the given approach is useful. It will enable us to perform certain block row matrix transformations on $xI - B_1(T)$ and $xI - B_2(T)$ in order to compute the necessary characteristic polynomials, as we shall soon see. We now state and prove two auxiliary lemmas which are necessary to complete the proof of Theorem \ref{main_theorem}.

\begin{lemma}\label{lemma_start}
    For each vertex $v \in V(T)$, the assigned rational function $\mathcal{F}(v, x)$ can be represented as a fraction of polynomials
    \begin{align*}
        \mathcal{F}(v, x) &= \dfrac{\mathcal{F}_1(v, x)}{\mathcal{F}_2(v, x)} ,
    \end{align*}
    so that $\mathcal{F}_1(v, x), \mathcal{F}_2(v, x) \in \mathbb{Z}[x]$ are both monic polynomials which satisfy
    \begin{align*}
        \deg \mathcal{F}_1(v, x) &= \deg \mathcal{F}_2(v, x) + 1 .
    \end{align*}
\end{lemma}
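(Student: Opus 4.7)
The plan is to prove the lemma by induction on the height $h(v)$ of the vertex, by which I mean the length of the longest downward path from $v$ to a leaf of the subtree rooted at $v$. The recursion (\ref{recursive}) is tailored for such an induction, as $\mathcal{F}(v,x)$ depends only on $\mathcal{F}(w,x)$ for $w\in c(v)$, each of which has strictly smaller height. For the base case $h(v)=0$, the vertex $v$ is a leaf, $c(v)=\emptyset$, and (\ref{recursive}) collapses to $\mathcal{F}(v,x)=x-\beta(v)$. Choosing $\mathcal{F}_1(v,x)=x-\beta(v)$ and $\mathcal{F}_2(v,x)=1$ then satisfies all the required conditions.

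For the inductive step I would assume the lemma at every child $w\in c(v)$ and write $\mathcal{F}(w,x)=\mathcal{F}_1(w,x)/\mathcal{F}_2(w,x)$. Clearing denominators in (\ref{recursive}) over the common denominator $\prod_{w\in c(v)}\mathcal{F}_1(w,x)$ leads to the natural candidates
\begin{align*}
\mathcal{F}_2(v,x) &= \prod_{w\in c(v)}\mathcal{F}_1(w,x), \\
\mathcal{F}_1(v,x) &= (x-\beta(v))\,\mathcal{F}_2(v,x) \;-\; \sum_{w\in c(v)} \mathcal{F}_2(w,x) \prod_{w'\in c(v)\setminus\{w\}}\mathcal{F}_1(w',x),
\end{align*}
both of which lie in $\mathbb{Z}[x]$ because $\beta(v)\in\mathbb{Z}$ and all the inductive pieces are integer polynomials. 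By the inductive hypothesis each $\mathcal{F}_1(w,x)$ is monic of positive degree and is therefore nonzero, so inverting it inside $\mathbb{Z}(x)$ is legitimate and the product $\mathcal{F}(v,x)=\mathcal{F}_1(v,x)/\mathcal{F}_2(v,x)$ is well defined.

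The technical core of the step is the degree and monicity bookkeeping, which goes through cleanly because the term $(x-\beta(v))\,\mathcal{F}_2(v,x)$ strictly dominates all the correction terms. Writing $d_w=\deg\mathcal{F}_2(w,x)$, so that $\deg\mathcal{F}_1(w,x)=d_w+1$ by induction, and letting $N=\sum_{w\in c(v)}(d_w+1)$, one immediately gets $\deg\mathcal{F}_2(v,x)=N$ and $\mathcal{F}_2(v,x)$ monic as a product of monic polynomials. The piece $(x-\beta(v))\mathcal{F}_2(v,x)$ contributes a monic summand of degree $N+1$ to $\mathcal{F}_1(v,x)$, while each correction term $\mathcal{F}_2(w,x)\prod_{w'\ne w}\mathcal{F}_1(w',x)$ has degree $d_w+\bigl(N-(d_w+1)\bigr)=N-1$. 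Since $N-1<N+1$, no subtraction can affect the leading coefficient, so $\mathcal{F}_1(v,x)$ is monic of degree $N+1=\deg\mathcal{F}_2(v,x)+1$, closing the induction. The only real pitfall is ensuring this leading-term dominance; once the strict inequality $N-1<N+1$ is pinned down, everything else is routine arithmetic.
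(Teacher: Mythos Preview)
Your proof is correct and follows essentially the same approach as the paper: a bottom-up induction with the identical explicit construction of $\mathcal{F}_1(v,x)$ and $\mathcal{F}_2(v,x)$. The only cosmetic differences are that you induct on the height $h(v)$ rather than on the level index, and you spell out the degree bookkeeping (the $N+1$ versus $N-1$ comparison) that the paper leaves as ``clearly''.
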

\begin{proof}
    We will prove the lemma via mathematical induction. If we pick an arbitrary vertex $v \in n(T, l(T))$, we know that it must be a leaf, hence its assigned rational function is the linear polynomial $x - \beta(v)$, according to Eq.\ (\ref{recursive}). This rational function obviously satisfies the lemma statement, given the fact that $\mathcal{F}(v, x) = \dfrac{x - \beta(v)}{1}$. Now suppose that the statement holds for all of the vertices on level $j+1$, where $1 \le j \le l(T) - 1$. We will complete the proof by showing that it must hold for each vertex on level $j$ as well.
    
    Let $v \in V(T, j)$ be an arbitrary vertex on level $j$. From the induction hypothesis, we know that all of its children satisfy the lemma statement, i.e.\ for each $w \in c(v)$ we have
    \begin{align*}
        \mathcal{F}(w, x) &= \dfrac{\mathcal{F}_1(w, x)}{\mathcal{F}_2(w, x)} ,
    \end{align*}
    where $\mathcal{F}_1(w, x), \mathcal{F}_2(w, x) \in \mathbb{Z}[x]$ are monic polynomials such that
    \begin{align*}
        \deg \mathcal{F}_1(w, x) = \deg \mathcal{F}_2(w, x) + 1 .
    \end{align*}
    We further have
    \begin{align*}
        \mathcal{F}(v, x) &= x - \beta(v) - \sum_{w \in c(v)} \dfrac{1}{\mathcal{F}(w, x)}\\
        &= x - \beta(v) - \sum_{w \in c(v)} \dfrac{\mathcal{F}_2(w, x)}{\mathcal{F}_1(w, x)}\\
        &= \dfrac{(x - \beta(v)) \displaystyle\prod_{w \in c(v)}\mathcal{F}_1(w, x) - \displaystyle\sum_{w \in c(v)} \left( \mathcal{F}_2(w, x) \displaystyle\prod_{t \in c(v), t \neq w} \mathcal{F}_1(t, x) \right)} {\displaystyle\prod_{w \in c(v)} \mathcal{F}_1(w, x)} .
    \end{align*}
    If we write
    \begin{align*}
        \mathcal{F}_1(v, x) &= (x - \beta(v)) \displaystyle\prod_{w \in c(v)}\mathcal{F}_1(w, x) - \displaystyle\sum_{w \in c(v)} \left( \mathcal{F}_2(w, x) \displaystyle\prod_{t \in c(v), t \neq w} \mathcal{F}_1(t, x) \right) ,\\
        \mathcal{F}_2(v, x) &= \displaystyle\prod_{w \in c(v)} \mathcal{F}_1(w, x) ,
    \end{align*}
    we then obtain $\mathcal{F}(v, x) = \dfrac{\mathcal{F}_1(v, x)}{\mathcal{F}_2(v, x)}$, where $\mathcal{F}_1(v, x), \mathcal{F}_2(v, x) \in \mathbb{Z}[x]$ are clearly both monic and satisfy $\deg \mathcal{F}_1(v, x) = \deg \mathcal{F}_2(v, x) + 1$ as well. This implies that the lemma statement holds for an arbitrary $v \in V(T, j)$, which completes the proof via mathematical induction.
\end{proof}

Lemma \ref{lemma_start} directly shows that $\mathcal{F}(v, x) \in \mathbb{Z}(x)$ is invertible for each $v \in V(T)$. This is important because it makes the definition itself of the assigned rational functions valid, given the fact that the rational function assigned to each vertex demands that the rational functions assigned to all of its children have an inverse, according to Eq.\ (\ref{recursive}). We will make further use of this property in the next lemma.

\begin{lemma}\label{second_lemma}
    For each $1 \le j \le l(T)$, let $R_j$ be the diagonal square matrix of order $n(T, j)$ over the field $\mathbb{Z}(x)$, such that the diagonal entry corresponding to the vertex $v \in V(T, j)$ is equal to $\mathcal{F}(v, x)$, according to the predetermined order of vertices on level $j$. Then, the following equation holds
    \begin{align}\label{matrix_rec}
        R_j &= xI - C_j - D_{j+1}^T R_{j+1}^{-1} D_{j+1} ,
    \end{align}
    for all the $j = \overline{1, l(T)-1}$.
\end{lemma}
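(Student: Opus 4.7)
The plan is to establish Eq.\ (\ref{matrix_rec}) by direct computation, since both sides are diagonal matrices indexed by $V(T,j)$ and the recursive definition of $\mathcal{F}(v,x)$ is essentially the scalar form of the claimed identity.

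First, I would note that the matrix $R_{j+1}^{-1}$ is well-defined: by Lemma \ref{lemma_start}, each $\mathcal{F}(w,x)$ is a nonzero element of the field $\mathbb{Z}(x)$, so the diagonal matrix $R_{j+1}$ is invertible, with $R_{j+1}^{-1}$ being the diagonal matrix whose entry corresponding to $w \in V(T, j+1)$ equals $1/\mathcal{F}(w,x)$.

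Next, I would explicitly compute the $(v,v')$-entry of the product $D_{j+1}^T R_{j+1}^{-1} D_{j+1}$ for arbitrary $v, v' \in V(T, j)$. By the construction of $D_{j+1}$, its $(w, u)$-entry equals $1$ precisely when $u \in V(T,j)$ is the parent of $w \in V(T, j+1)$, and $0$ otherwise. Combining this with the fact that $R_{j+1}^{-1}$ is diagonal, the $(v, v')$-entry of the product becomes
\begin{align*}
\bigl[D_{j+1}^T R_{j+1}^{-1} D_{j+1}\bigr]_{v, v'} &= \sum_{w \in V(T, j+1)} [D_{j+1}]_{w, v}\, \dfrac{1}{\mathcal{F}(w,x)}\, [D_{j+1}]_{w, v'} .
\end{align*}
Since every vertex $w \in V(T, j+1)$ has a unique parent, the product $[D_{j+1}]_{w, v}\, [D_{j+1}]_{w, v'}$ vanishes whenever $v \neq v'$, so the product matrix is diagonal. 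When $v = v'$, the summands survive only for the children $w \in c(v)$, yielding the diagonal entry $\sum_{w \in c(v)} 1/\mathcal{F}(w,x)$.

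Finally, I would subtract this diagonal matrix from the diagonal matrix $xI - C_j$, whose entry corresponding to $v \in V(T, j)$ is $x - \beta(v)$. The resulting $v$-th diagonal entry is
\begin{align*}
x - \beta(v) - \sum_{w \in c(v)} \dfrac{1}{\mathcal{F}(w, x)} ,
\end{align*}
which equals $\mathcal{F}(v,x)$ by Eq.\ (\ref{recursive}), and hence coincides with the $v$-th diagonal entry of $R_j$. This establishes Eq.\ (\ref{matrix_rec}). There is no real obstacle here beyond bookkeeping: the main point is to be careful that the ordering of vertices within each level used to index $R_j$, $C_j$, and $D_{j+1}$ is the same one fixed in the block decomposition of $A(T)$, so that the matrix identity is literal rather than merely up to conjugation by a permutation.
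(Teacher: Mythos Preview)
Your proposal is correct and follows essentially the same approach as the paper's own proof: both invoke Lemma~\ref{lemma_start} for the invertibility of $R_{j+1}$, compute $D_{j+1}^T R_{j+1}^{-1} D_{j+1}$ entrywise to see it is diagonal with $v$-th entry $\sum_{w\in c(v)} 1/\mathcal{F}(w,x)$, and then compare with Eq.~(\ref{recursive}). Your closing remark about the consistent vertex ordering is a sensible addition but not something the paper dwells on.
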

\begin{proof}
    First of all, the matrix $R_{j+1}$ is invertible, due to the fact that it is diagonal, and all of its diagonal entries are invertible, as a direct consequence of Lemma \ref{lemma_start}. This means that the formula Eq.\ (\ref{matrix_rec}) is valid to begin with. Furthermore, $R_{j+1}^{-1}$ must be equal to the diagonal matrix such that the element corresponding to the vertex $v \in V(T, j+1)$ is $\dfrac{1}{\mathcal{F}(v, x)}$. If we denote $Z_j = D_{j+1}^T R_{j+1}^{-1} D_{j+1}$, from basic matrix multiplication we get
    \begin{align}\nonumber
        [Z_j]_{\alpha, \beta} &= \sum_{i = 1}^{n(T, j+1)}\sum_{h = 1}^{n(T, j+1)}[D_{j+1}^T]_{\alpha, i} \ [R_{j+1}^{-1}]_{i, h} \ [D_{j+1}]_{h, \beta}\\
        \label{mult_sum_1}&= \sum_{i, h = 1}^{n(T, j+1)}[D_{j+1}^T]_{\alpha, i} \ [R_{j+1}^{-1}]_{i, h} \ [D_{j+1}]_{h, \beta} \, .
    \end{align}
    Given the fact that the matrix $R_{j+1}^{-1}$ is diagonal, we conclude that all of the sum terms in Eq.\ (\ref{mult_sum_1}) where $i \neq h$ amount to zero. This leads us to
    \begin{align}\nonumber
        [Z_j]_{\alpha, \beta} &= \sum_{i = 1}^{n(T, j+1)}[D_{j+1}^T]_{\alpha, i} \ [R_{j+1}^{-1}]_{i, i} \ [D_{j+1}]_{i, \beta}\\
        \label{mult_sum_2} &= \sum_{i = 1}^{n(T, j+1)}[D_{j+1}]_{i, \alpha} \ [D_{j+1}]_{i, \beta} \ [R_{j+1}^{-1}]_{i, i} \, .
    \end{align}
    Since the matrix $D_{j+1}$ has exactly one non-zero element per row, it immediately follows from Eq.\ (\ref{mult_sum_2}) that $[Z_j]_{\alpha, \beta} = 0$ whenever $\alpha \neq \beta$. This means that the matrix $Z_j$ is necessarily diagonal. We know that $D_{j+1}$ is also binary, which means that
    \begin{align}\nonumber
        [Z_j]_{\alpha, \alpha} &= \sum_{i = 1}^{n(T, j+1)}[D_{j+1}]_{i, \alpha} \ [D_{j+1}]_{i, \alpha} \ [R_{j+1}^{-1}]_{i, i}\\
        \label{mult_sum_3} &= \sum_{i = 1}^{n(T, j+1)}[D_{j+1}]_{i, \alpha} \ [R_{j+1}^{-1}]_{i, i} \, .
    \end{align}
    From Eq.\ (\ref{mult_sum_3}) it becomes easy to see that $Z_j$ must be the diagonal matrix of order $n(T, j)$ such that the diagonal entry corresponding to the vertex $v \in V(T, j)$ is equal to $\displaystyle\sum_{w \in c(v)} \dfrac{1}{\mathcal{F}(w, x)}$. In that case, the matrix $xI - C_j - D_{j+1}^T R_{j+1}^{-1} D_{j+1}$ must be diagonal and such that the diagonal entry corresponding to the vertex $v \in V(T, j)$ is equal to $x - \beta(v) - \displaystyle\sum_{w \in c(v)} \dfrac{1}{\mathcal{F}(w, x)}$. Hence, this matrix must be equal to $R_j$.
\end{proof}

We now have all of the tools necessary to complete the proof of Theorem~\ref{main_theorem}.

\bigskip
\noindent
{\em Proof of Theorem \ref{main_theorem}}.\quad We will prove only the
$$\det(xI - B_1(T)) = \prod_{v \in V(T)} \mathcal{F}(v, x)$$
part of Eq.\ (\ref{main_formula}). The second half is proved absolutely analogously, so the corresponding proof will be omitted.

\newpage
Given the fact that all of the vertices on level $l(T)$ are leaves, it is clear that $R_{l(T)} = xI - C_{l(T)}$. From Eq.\ (\ref{det_b1}) we get
    \begin{align*}
        \det(&xI - B_1(T)) =\\
        &= \begin{vmatrix}
            R_{l(T)} & -D_{l(T)} & O & \cdots & O & O\\
            -D_{l(T)}^T & xI - C_{l(T)-1} & -D_{l(T)-1} & \cdots & O & O\\
            O & -D_{l(T)-1}^T & xI - C_{l(T)-2} & \cdots & O & O\\
            \vdots & \vdots & \vdots & \ddots & \vdots & \vdots\\
            O & O & O & \cdots & xI - C_2 & -D_2\\
            O & O & O & \cdots & -D_2^T & xI - C_1
        \end{vmatrix}.
    \end{align*}
    Due to Lemma \ref{second_lemma}, we know that the matrix $R_{l(T)}$ is invertible. We can now multiply the first block row with $D_{l(T)}^T R_{l(T)}^{-1}$ to the left and add it to the second block row, in order to obtain
    \begin{align*}
        \det(xI - B_1(T)) &= \begin{vmatrix}
            R_{l(T)} & -D_{l(T)} & O & \cdots & O & O\\
            O & R_{l(T)-1} & -D_{l(T)-1} & \cdots & O & O\\
            O & -D_{l(T)-1}^T & xI - C_{l(T)-2} & \cdots & O & O\\
            \vdots & \vdots & \vdots & \ddots & \vdots & \vdots\\
            O & O & O & \cdots & xI - C_2 & -D_2\\
            O & O & O & \cdots & -D_2^T & xI - C_1
        \end{vmatrix}.
    \end{align*}
    Here, we have used the fact that $R_{l(T)-1} = xI - C_{l(T)-1} - D_{l(T)}^T R_{l(T)}^{-1}D_{l(T)}$, which follows from Lemma \ref{second_lemma}. We can now multiply the second block row with $D_{l(T)-1}^{T}R_{l(T)-1}^{-1}$ to the left and add it to the third block row, thus getting
    \begin{align*}
        \det(xI - B_1(T)) &= \begin{vmatrix}
            R_{l(T)} & -D_{l(T)} & O & \cdots & O & O\\
            O & R_{l(T)-1} & -D_{l(T)-1} & \cdots & O & O\\
            O & O & R_{l(T)-2} & \cdots & O & O\\
            \vdots & \vdots & \vdots & \ddots & \vdots & \vdots\\
            O & O & O & \cdots & xI - C_2 & -D_2\\
            O & O & O & \cdots & -D_2^T & xI - C_1
        \end{vmatrix},
    \end{align*}
    thanks to $R_{l(T)-2} = xI - C_{l(T)-2} - D_{l(T)-1}^T R_{l(T)-1}^{-1}D_{l(T)-1}$, which is also a consequence of Lemma \ref{second_lemma}. By repeating the same process until the last row, we conclude that
    \begin{align*}
        \det(xI - B_1(T)) &= \begin{vmatrix}
            R_{l(T)} & -D_{l(T)} & O & \cdots & O & O\\
            O & R_{l(T)-1} & -D_{l(T)-1} & \cdots & O & O\\
            O & O & R_{l(T)-2} & \cdots & O & O\\
            \vdots & \vdots & \vdots & \ddots & \vdots & \vdots\\
            O & O & O & \cdots & R_2 & -D_2\\
            O & O & O & \cdots & O & R_1
        \end{vmatrix}.
    \end{align*}
    It immediately follows that
    \begin{align*}
        \det(xI - B_1(T)) &= \prod_{j = 1}^{l(T)} \det(R_j) .
    \end{align*}
    However, from the definition of $R_j$ we know that $\det(R_j) = \displaystyle\prod_{v \in V(T, j)} \mathcal{F}(v, x)$, which directly gives us
    \begin{align*}
        \det(xI - B_1(T)) &= \prod_{j = 1}^{l(T)} \prod_{v \in V(T, j)} \mathcal{F}(v, x)\\
        &= \prod_{v \in V(T)} \mathcal{F}(v, x) .
    \end{align*}
    \qed

\begin{remark}
    It is also possible to allow the $\beta$-sequence to be a sequence of real numbers, instead of strictly integers. In this case, Theorem \ref{main_theorem} would continue to hold, with the sole difference being that the assigned rational functions $\mathcal{F}(v, x)$ would be members of $\mathbb{R}(x)$ instead of $\mathbb{Z}(x)$.
\end{remark}

\subsection{Usage on small trees}\label{subsc_small}

In this subsection, we demonstrate the usage of Corollaries \ref{adj_cor} and \ref{lap_cor} while computing the characteristic polynomials of the adjacency and Laplacian matrix of two concrete rooted trees of small order.

\begin{example}\label{example_1}
    Let $T$ be the rooted tree given on Figure \ref{small_tree_1}.
\begin{figure}
    \centering
    \begin{tikzpicture}
        \node[state, minimum size=0.25cm, thick, label=below:$x$] (1) at (0, 0) {$u_1$};
        \node[state, minimum size=0.25cm, thick, label=below:$x$] (2) at (1.2, 0) {$u_2$};
        \node[state, minimum size=0.25cm, thick, label=below:$x$] (3) at (2.4, 0) {$u_3$};
        \node[state, minimum size=0.25cm, thick, label=below:$x$] (4) at (3.6, 0) {$u_4$};
        \node[state, minimum size=0.25cm, thick, label=below:$x$] (5) at (4.8, 0) {$u_5$};
        
        \node[state, minimum size=0.25cm, thick, label=left:$x-\frac{2}{x}$] (6) at (0.6, 1.5) {$u_6$};
        \node[state, minimum size=0.25cm, thick, label=right:$x-\frac{3}{x}$] (7) at (3.6, 1.5) {$u_7$};

        \node[state, minimum size=0.25cm, thick, label=above:$x-\frac{1}{x-\frac{2}{x}}-\frac{1}{x-\frac{3}{x}}$] (8) at (2.1, 3) {$u_8$};

        \path[thick] (1) edge (6);
        \path[thick] (2) edge (6);
        \path[thick] (3) edge (7);
        \path[thick] (4) edge (7);
        \path[thick] (5) edge (7);
        \path[thick] (6) edge (8);
        \path[thick] (7) edge (8);
    \end{tikzpicture}
    \caption{The rooted tree $T$ from Example \ref{example_1}, along with its assigned rational functions $\mathcal{G}$ used to compute $P(T, x)$.}
    \label{small_tree_1}
\end{figure}
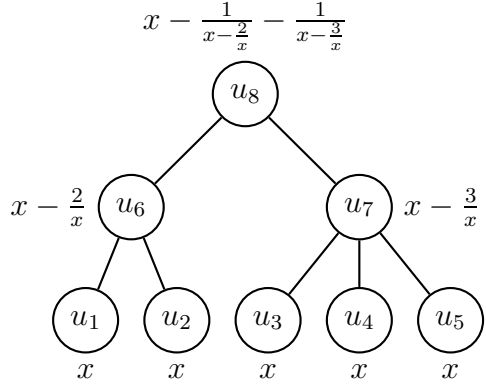
According to Corollary~\ref{adj_cor} and Figure~\ref{small_tree_1}, we obtain
\begin{align*}
    \mathcal{G}(u_1, x) &= \mathcal{G}(u_2, x) = \mathcal{G}(u_3, x) = \mathcal{G}(u_4, x) = \mathcal{G}(u_5, x) = x ,\\
    \mathcal{G}(u_6, x) &= x - \frac{2}{x} = \dfrac{x^2-2}{x} ,\\
    \mathcal{G}(u_7, x) &= x - \frac{3}{x} = \dfrac{x^2-3}{x} ,\\
    \mathcal{G}(u_8, x) &= x - \frac{x}{x^2-2} - \frac{x}{x^2-3} = \frac{x^5-7x^3 +11x}{(x^2-2)(x^2-3)} .
\end{align*}
This leads us to
\begin{align*}
    P(T, x) &= \prod_{j=1}^{8} \mathcal{G}(u_j, x)\\
    &= x^5 \ \frac{x^2-2}{x} \ \frac{x^2-3}{x} \ \frac{x^5-7x^3 +11x}{(x^2-2)(x^2-3)}\\
    &= x^4 (x^4 - 7x^2 + 11) .
\end{align*}
Thus, we have managed to compute the characteristic polynomial of the adjacency matrix of $T$. From here, it is straightforward to see that $A(T)$ has four simple eigenvalues $\pm \sqrt{\dfrac{7 \pm \sqrt{5}}{2}}$, as well as the eigenvalue $0$ whose multiplicity is four. Also, the energy of $T$ must be equal to
\begin{align*}
    \mathcal{E}(T) &= 2 \sqrt{\dfrac{7 + \sqrt{5}}{2}} + 2 \sqrt{\dfrac{7 - \sqrt{5}}{2}}\\
    &= \sqrt{14 + 2 \sqrt{5}} + \sqrt{14 - 2 \sqrt{5}} \, .
\end{align*}

\newpage
If our goal is to compute $Q(T, x)$ and the spectrum of $L(T)$ instead, we can implement Corollary \ref{lap_cor} in a very similar manner, as depicted on Figure \ref{small_tree_2}.
\begin{figure}
    \centering
    \begin{tikzpicture}
        \node[state, minimum size=0.25cm, thick, label=below:$x-1$] (1) at (0, 0) {$u_1$};
        \node[state, minimum size=0.25cm, thick, label=below:$x-1$] (2) at (1.4, 0) {$u_2$};
        \node[state, minimum size=0.25cm, thick, label=below:$x-1$] (3) at (2.8, 0) {$u_3$};
        \node[state, minimum size=0.25cm, thick, label=below:$x-1$] (4) at (4.2, 0) {$u_4$};
        \node[state, minimum size=0.25cm, thick, label=below:$x-1$] (5) at (5.6, 0) {$u_5$};
        
        \node[state, minimum size=0.25cm, thick, label=left:$x-3-\frac{2}{x-1}$] (6) at (0.7, 1.5) {$u_6$};
        \node[state, minimum size=0.25cm, thick, label=right:$x-4-\frac{3}{x-1}$] (7) at (4.2, 1.5) {$u_7$};

        \node[state, minimum size=0.25cm, thick, label=above:$x-2-\frac{1}{x-3-\frac{2}{x-1}}-\frac{1}{x-4-\frac{3}{x-1}}$] (8) at (2.45, 3) {$u_8$};

        \path[thick] (1) edge (6);
        \path[thick] (2) edge (6);
        \path[thick] (3) edge (7);
        \path[thick] (4) edge (7);
        \path[thick] (5) edge (7);
        \path[thick] (6) edge (8);
        \path[thick] (7) edge (8);
    \end{tikzpicture}
    \caption{The rooted tree $T$ from Example \ref{example_1}, together with the assigned rational functions $\mathcal{H}$ which are used to compute $Q(T, x)$.}
    \label{small_tree_2}
\end{figure}
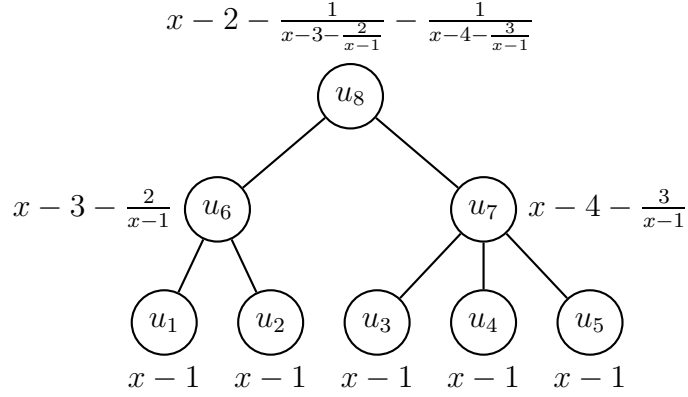
In this case we get
\begin{align*}
    \mathcal{H}(u_1, x) &= \mathcal{H}(u_2, x) = \mathcal{H}(u_3, x) = \mathcal{H}(u_4, x) = \mathcal{H}(u_5, x) = x - 1 ,\\
    \mathcal{H}(u_6, x) &= x - 3 - \frac{2}{x-1} = \dfrac{x^2 - 4x + 1}{x-1} ,\\
    \mathcal{H}(u_7, x) &= x - 4 - \frac{3}{x-1} = \dfrac{x^2 - 5x + 1}{x-1} ,\\
    \mathcal{H}(u_8, x) &= x - 2 - \frac{x-1}{x^2-4x + 1} - \frac{x-1}{x^2-5x+1} = \frac{x^5 - 11x^4 + 38x^3 - 42x^2 + 8x}{(x^2-4x+1)(x^2-5x+1)} ,
\end{align*}
which means that
\begin{align*}
    Q(T, x) &= \prod_{j = 1}^{8} \mathcal{H}(u_j, x)\\
    &= (x-1)^5 \ \dfrac{x^2 - 4x + 1}{x-1} \ \dfrac{x^2 - 5x + 1}{x-1} \ \frac{x^5 - 11x^4 + 38x^3 - 42x^2 + 8x}{(x^2-4x+1)(x^2-5x+1)}\\
    &= x(x-1)^3 (x^4 - 11x^3 + 38x^2 - 42x+8)\\
    &= x(x-1)^3 (x-4) (x^3 - 7x^2 + 10x - 2) .
\end{align*}
We have computed $Q(T, x)$ and it is now clear that the spectrum of $L(T)$ is composed of the simple eigenvalues $0$ and $4$, an eigenvalue $1$ of multiplicity three, as well as the roots of $x^3 - 7x^2 + 10x - 2$. It is trivial to numerically check that these roots are also simple eigenvalues.
\end{example}

\begin{example}\label{example_2}
    Let $T$ be the rooted tree given on Figure \ref{small_tree_3}.
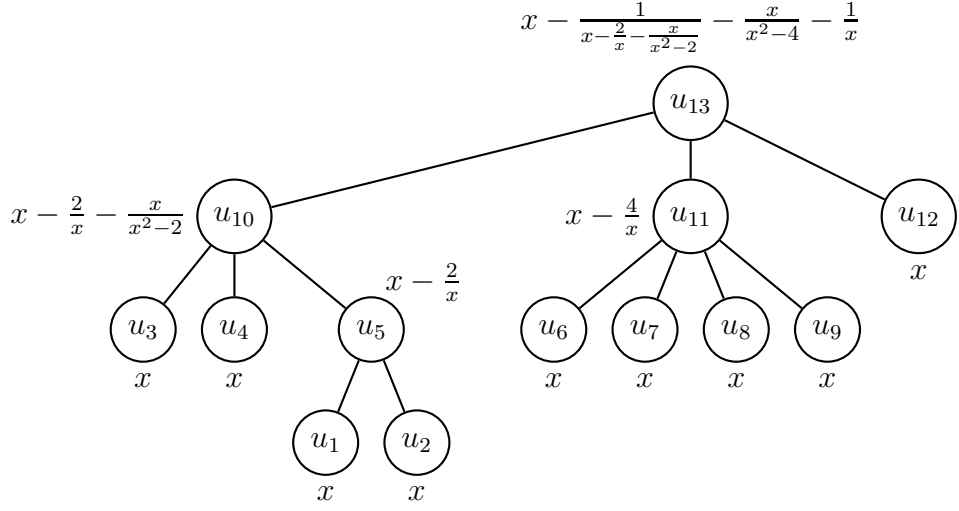
\begin{figure}
    \centering
    \begin{tikzpicture}
        \node[state, minimum size=0.25cm, thick, label=below:$x$] (12) at (2.4, -1.5) {$u_1$};
        \node[state, minimum size=0.25cm, thick, label=below:$x$] (13) at (3.6, -1.5) {$u_2$};
    
        \node[state, minimum size=0.25cm, thick, label=below:$x$] (1) at (5.4, 0) {$u_6$};
        \node[state, minimum size=0.25cm, thick, label=below:$x$] (2) at (6.6, 0) {$u_7$};
        \node[state, minimum size=0.25cm, thick, label=below:$x$] (3) at (7.8, 0) {$u_8$};
        \node[state, minimum size=0.25cm, thick, label=below:$x$] (4) at (9, 0) {$u_9$};
        \node[state, minimum size=0.25cm, thick, label=below:$x$] (5) at (0, 0) {$u_3$};
        \node[state, minimum size=0.25cm, thick, label=below:$x$] (6) at (1.2, 0) {$u_4$};
        \node[state, minimum size=0.25cm, thick, label={[label distance=-0.2cm]80:$x-\frac{2}{x}$}] (7) at (3, 0) {$u_5$};
        
        \node[state, minimum size=0.25cm, thick, label=left:$x-\frac{4}{x}$] (8) at (7.2, 1.5) {$u_{11}$};
        \node[state, minimum size=0.25cm, thick, label=left:$x-\frac{2}{x}-\frac{x}{x^2-2}$] (9) at (1.2, 1.5) {$u_{10}$};
        \node[state, minimum size=0.25cm, thick, label=below:$x$] (10) at (10.2, 1.5) {$u_{12}$};

        \node[state, minimum size=0.25cm, thick, label=above:$x-\frac{1}{x-\frac{2}{x}-\frac{x}{x^2-2}}-\frac{x}{x^2-4}-\frac{1}{x}$] (11) at (7.2, 3) {$u_{13}$};

        \path[thick] (12) edge (7);
        \path[thick] (13) edge (7);
        \path[thick] (1) edge (8);
        \path[thick] (2) edge (8);
        \path[thick] (3) edge (8);
        \path[thick] (4) edge (8);
        \path[thick] (5) edge (9);
        \path[thick] (6) edge (9);
        \path[thick] (7) edge (9);
        \path[thick] (8) edge (11);
        \path[thick] (9) edge (11);
        \path[thick] (10) edge (11);
    \end{tikzpicture}
    \caption{The rooted tree $T$ from Example \ref{example_2}, along with its assigned rational functions $\mathcal{G}$ used to compute $P(T, x)$.}
    \label{small_tree_3}
\end{figure}
Corollary \ref{adj_cor} gives us
\begin{align*}
    \mathcal{G}(u_1, x) &= \mathcal{G}(u_2, x) = \mathcal{G}(u_3, x) = \mathcal{G}(u_4, x) = x ,\\
    \mathcal{G}(u_5, x) &= x - \dfrac{2}{x} = \dfrac{x^2-2}{x} ,\\
    \mathcal{G}(u_6, x) &= \mathcal{G}(u_7, x) = \mathcal{G}(u_8, x) = \mathcal{G}(u_9, x) = x ,\\
    \mathcal{G}(u_{10}, x) &= x - \dfrac{2}{x} - \dfrac{x}{x^2-2} = \dfrac{(x^2 - 1)(x^2 - 4)}{x(x^2-2)} ,\\
    \mathcal{G}(u_{11}, x) &= x - \dfrac{4}{x} = \dfrac{x^2-4}{x} ,\\
    \mathcal{G}(u_{12}, x) &= x ,\\
    \mathcal{G}(u_{13}, x) &= x - \dfrac{x(x^2-2)}{(x^2-1)(x^2-4)} - \dfrac{x}{x^2-4} - \dfrac{1}{x} = \dfrac{x^6 - 8x^4 + 12x^2 - 4}{x(x^2-1)(x^2-4)} ,
\end{align*}
as shown on Figure \ref{small_tree_3}. Furthermore,
\begin{align*}
    P(T, x) &= \prod_{j = 1}^{13} \mathcal{G}(u_j, x)\\
    &= x^9 \ \dfrac{x^2-2}{x} \ \dfrac{(x^2-1)(x^2-4)}{x(x^2-2)} \ \dfrac{x^2-4}{x} \ \dfrac{x^6 - 8x^4 + 12x^2 - 4}{x(x^2-1)(x^2-4)}\\
    &= x^5 (x^2-4)(x^6 - 8x^4 + 12x^2 - 4) .
\end{align*}
We have computed $P(T, x)$, which allows us to determine the spectrum of $A(T)$. It is not difficult to see that it is composed of two simple eigenvalues $\pm 2$, an eigenvalue $0$ whose multiplicity is five, as well as six additional simple eigenvalues which represent the roots of the polynomial $x^6 - 8x^4 + 12x^2 - 4$. It is easy to numerically check that the polynomial $x^6 - 8x^4 + 12x^2 - 4$ really does have six distinct real roots which are all different from $0$ and $\pm 2$.

Finally, we will rely on Corollary \ref{lap_cor} in order to find $Q(T, x)$. The key part of the computation process is shown on Figure \ref{small_tree_4}.
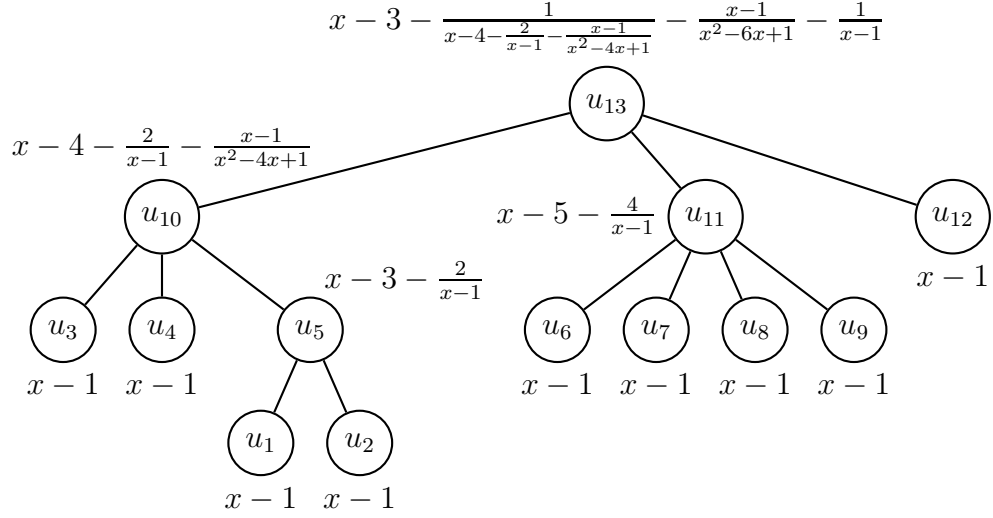
\begin{figure}
    \centering
    \begin{tikzpicture}
        \node[state, minimum size=0.25cm, thick, label=below:$x-1$] (12) at (2.6, -1.5) {$u_1$};
        \node[state, minimum size=0.25cm, thick, label=below:$x-1$] (13) at (3.9, -1.5) {$u_2$};
    
        \node[state, minimum size=0.25cm, thick, label=below:$x-1$] (1) at (6.5, 0) {$u_6$};
        \node[state, minimum size=0.25cm, thick, label=below:$x-1$] (2) at (7.8, 0) {$u_7$};
        \node[state, minimum size=0.25cm, thick, label=below:$x-1$] (3) at (9.1, 0) {$u_8$};
        \node[state, minimum size=0.25cm, thick, label=below:$x-1$] (4) at (10.4, 0) {$u_9$};
        \node[state, minimum size=0.25cm, thick, label=below:$x-1$] (5) at (0, 0) {$u_3$};
        \node[state, minimum size=0.25cm, thick, label=below:$x-1$] (6) at (1.3, 0) {$u_4$};
        \node[state, minimum size=0.25cm, thick, label={[label distance=-0.2cm]80:$x-3-\frac{2}{x-1}$}] (7) at (3.25, 0) {$u_5$};
        
        \node[state, minimum size=0.25cm, thick, label={left:$x-5-\frac{4}{x-1}$}] (8) at (8.45, 1.5) {$u_{11}$};
        \node[state, minimum size=0.25cm, thick, label=above:$x-4-\frac{2}{x-1}-\frac{x-1}{x^2-4x+1}$] (9) at (1.3, 1.5) {$u_{10}$};
        \node[state, minimum size=0.25cm, thick, label=below:$x-1$] (10) at (11.7, 1.5) {$u_{12}$};

        \node[state, minimum size=0.25cm, thick, label=above:$x-3-\frac{1}{x-4-\frac{2}{x-1}-\frac{x-1}{x^2-4x+1}}-\frac{x-1}{x^2-6x+1}-\frac{1}{x-1}$] (11) at (7.15, 3) {$u_{13}$};

        \path[thick] (12) edge (7);
        \path[thick] (13) edge (7);
        \path[thick] (1) edge (8);
        \path[thick] (2) edge (8);
        \path[thick] (3) edge (8);
        \path[thick] (4) edge (8);
        \path[thick] (5) edge (9);
        \path[thick] (6) edge (9);
        \path[thick] (7) edge (9);
        \path[thick] (8) edge (11);
        \path[thick] (9) edge (11);
        \path[thick] (10) edge (11);
    \end{tikzpicture}
    \caption{The rooted tree $T$ from Example \ref{example_2}, together with the assigned rational functions $\mathcal{H}$ which are used to compute $Q(T, x)$.}
    \label{small_tree_4}
\end{figure}
We compute
\begin{align*}
    \mathcal{H}(u_1, x) &= \mathcal{H}(u_2, x) = \mathcal{H}(u_3, x) = \mathcal{H}(u_4, x) = x-1 ,\\
    \mathcal{H}(u_5, x) &= x - 3 - \dfrac{2}{x-1} = \dfrac{x^2-4x+1}{x-1} ,\\
    \mathcal{H}(u_6, x) &= \mathcal{H}(u_7, x) = \mathcal{H}(u_8, x) = \mathcal{H}(u_9, x) = x-1 ,\\
    \mathcal{H}(u_{10}, x) &= x - 4 - \dfrac{2}{x-1} - \dfrac{x-1}{x^2-4x+1} = \dfrac{x^4 - 9x^3 + 22x^2 - 11x + 1}{(x-1)(x^2-4x+1)} ,\\
    \mathcal{H}(u_{11}, x) &= x - 5 - \dfrac{4}{x-1} = \dfrac{x^2-6x + 1}{x-1} ,\\
    \mathcal{H}(u_{12}, x) &= x - 1 ,\\
    \mathcal{H}(u_{13}, x) &= x - 3 - \dfrac{(x-1)(x^2-4x+1)}{x^4 - 9x^3 + 22x^2 - 11x + 1} - \dfrac{x-1}{x^2-6x + 1} - \dfrac{1}{x-1}\\
    &= \dfrac{x^8-19x^7+137x^6-467x^5+763x^4-541x^3+155x^2-13x}{(x-1)(x^2-6x+1)(x^4-9x^3+22x^2-11x+1)} .
\end{align*}
This gives us
\begin{align*}
    Q(T, x) &= \prod_{j = 1}^{13} \mathcal{H}(u_j, x)\\
    &= (x-1)^9 \ \dfrac{x^2-4x+1}{x-1} \ \dfrac{x^4 - 9x^3 + 22x^2 - 11x + 1}{(x-1)(x^2-4x+1)} \ \dfrac{x^2-6x + 1}{x-1}\\
    &\qquad\quad \cdot \dfrac{x^8-19x^7+137x^6-467x^5+763x^4-541x^3+155x^2-13x}{(x-1)(x^2-6x+1)(x^4-9x^3+22x^2-11x+1)}\\
    &= x (x-1)^5 (x^7-19x^6+137x^5-467x^4+763x^3-541x^2+155x-13) .
\end{align*}
If we need to determine the spectrum of $L(T)$ as well, we can immediately notice that it contains the simple eigenvalue $0$ and the eigenvalue $1$ whose multiplicity is five. The remaining eigenvaleus can be computed numerically without issues.
\end{example}

As we have seen in Examples \ref{example_1} and \ref{example_2}, Corollaries \ref{adj_cor} and \ref{lap_cor} can be implemented in order to compute the characteristic polynomial of both the adjacency and the Laplacian matrix of any rooted tree. By comparing these two basic examples, it becomes apparent that if the tree does not have a regular structure, then the manual computation process gets noticeably more tedious as the tree grows in height. The next section will implement these two corollaries on balanced trees, and thereby focus on the rooted trees which do have a regular structure.

\section{Spectral properties of balanced trees}\label{sc_balanced}
\subsection{General results}\label{subsc_balanced_main}

For a given balanced tree $T$, let $c_1, c_2, \ldots, c_{l(T)}$ be the sequence which defines how many children every vertex from each level has. In other words, $c_j$ should represent the number of children of every vertex from level $j$, for all the $1 \le j \le l(T)$. Here, it is important to note that $c_j = d(v) - 1$ holds for each $v \in V(T, j)$, provided $j > 1$. Also, it is clear that $c_1 = d(r(T))$. In order to resume our investigation of spectral properties regarding balanced trees, we will need the following short lemma.
\begin{lemma}\label{balanced_lemma}
    Let $T$ be any balanced tree with an arbitrarily chosen $\beta$-sequence such that any two vertices on the same level have equal corresponding $\beta$-elements. If we recursively assign a rational function $\mathcal{F}(v, x) \in \mathbb{Z}(x)$ to each vertex $v \in V(T)$ via Eq.\ (\ref{recursive}), then any two vertices on the same level must have equal assigned rational functions.
\end{lemma}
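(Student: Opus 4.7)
The natural approach is bottom-up induction on the level index $j$, descending from $j = l(T)$ down to $j = 1$, since the recursion in Eq.~(\ref{recursive}) defines $\mathcal{F}(v,x)$ in terms of the functions attached to the children of $v$.

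For the base case, I take $j = l(T)$. Every vertex on this level is a leaf, so Eq.~(\ref{recursive}) degenerates to $\mathcal{F}(v, x) = x - \beta(v)$. By the hypothesis of the lemma, $\beta$ is constant on each level, so this linear polynomial is the same for all $v \in V(T, l(T))$, establishing the base case.

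For the inductive step, I assume that all vertices in $V(T, j+1)$ have one and the same assigned rational function, call it $\Phi_{j+1}(x)$, and take two arbitrary vertices $v_1, v_2 \in V(T, j)$. Because $T$ is balanced, the definition of the sequence $c_1, c_2, \ldots, c_{l(T)}$ yields $|c(v_1)| = |c(v_2)| = c_j$, and every element of $c(v_1) \cup c(v_2)$ lies on level $j+1$. Applying the induction hypothesis, each of these children carries the rational function $\Phi_{j+1}(x)$. Combining this with $\beta(v_1) = \beta(v_2)$ (again from the hypothesis on $\beta$) and substituting into Eq.~(\ref{recursive}) gives
\begin{align*}
    \mathcal{F}(v_1, x) \;=\; x - \beta(v_1) - \frac{c_j}{\Phi_{j+1}(x)} \;=\; x - \beta(v_2) - \frac{c_j}{\Phi_{j+1}(x)} \;=\; \mathcal{F}(v_2, x),
\end{align*}
so the common value $\Phi_j(x)$ is well defined on level $j$. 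This completes the induction.

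There is no serious obstacle here; the argument is essentially a direct unfolding of the recursion combined with the balanced-tree regularity. The only subtlety worth flagging is that one must invoke Lemma~\ref{lemma_start} (or simply the output of the preceding step of the induction) to guarantee that $\Phi_{j+1}(x)$ is invertible in $\mathbb{Z}(x)$, so that the term $c_j / \Phi_{j+1}(x)$ in the display above is legitimate; but this is automatic from the already-established invertibility of every assigned rational function.
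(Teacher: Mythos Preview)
Your proof is correct and follows essentially the same approach as the paper: both argue by downward induction on the level index, using the leaf case $\mathcal{F}(v,x)=x-\beta(v)$ as the base and then reducing the sum in Eq.~(\ref{recursive}) to $c_j/\Phi_{j+1}(x)$ via the induction hypothesis. Your version is slightly more explicit about invertibility, but the structure is the same.
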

\begin{proof}
    The lemma is straightforward to prove via mathematical induction. The statement obviously holds for the vertices on the last level $l(T)$, given the fact that Eq.\ (\ref{recursive}) amounts to $\mathcal{F}(v, x) = x - \beta(v)$ for any $v \in V(T, l(T))$. Suppose that the statement holds for the vertices on level $j+1$, where $1 \le j \le l(T)-1$. We will now prove that it must hold for the vertices on level $j$ as well.
    
    Let $v$ be any vertex from level $j$. Also, let $v_1 \in V(T, j)$ and $v_2 \in V(T, j+1)$ be two arbitrarily chosen fixed vertices. By implementing the induction hypothesis, as well as the fact that $\beta(v) = \beta(v_1)$, Eq.\ (\ref{recursive}) helps us obtain
    \begin{align*}
        \mathcal{F}(v, x) &= x - \beta(v) - \sum_{w \in c(v)} \dfrac{1}{\mathcal{F}(w, x)}\\
        &= x - \beta(v_1) - \dfrac{c_j}{\mathcal{F}(v_2, x)} .
    \end{align*}
    It becomes clear that $\mathcal{F}(v, x)$ is the same for any chosen $v \in V(T, j)$, which completes the proof.
\end{proof}

The key observation is that Lemma \ref{balanced_lemma} can be implemented on the assigned rational functions $\mathcal{G}$ and $\mathcal{H}$ defined in Corollaries \ref{adj_cor} and \ref{lap_cor}, respectively. This implies that for any balanced tree $T$ it is much more convenient to view the aforementioned assigned rational functions as sequences $\mathcal{G}_1, \mathcal{G}_2, \ldots, \mathcal{G}_{l(T)}$ and $\mathcal{H}_1, \mathcal{H}_2, \ldots, \mathcal{H}_{l(T)}$, such that $\mathcal{G}_j$ and $\mathcal{H}_j$ are assigned to every vertex $v \in V(T, j)$, for all the $1 \le j \le l(T)$. This swiftly leads to the following two corollaries.

\begin{corollary}\label{meh_1}
    Let $T$ be an arbitrary balanced tree. If $\mathcal{G}_1, \mathcal{G}_2, \ldots, \mathcal{G}_{l(T)} \in \mathbb{Z}(x)$ is a sequence of rational functions which is recursively defined via
    \begin{align}
        \nonumber\mathcal{G}_{l(T)} &= x ,\\
        \label{cool_g}\mathcal{G}_j &= x - \dfrac{c_j}{\mathcal{G}_{j+1}} \qquad (\forall j \in \overline{1, l(T)-1}) ,
    \end{align}
    then
    \begin{align}\label{meh_formula_1}
        P(T, x) &= \prod_{j=1}^{l(T)} \mathcal{G}_j^{n(T, j)} .
    \end{align}
\end{corollary}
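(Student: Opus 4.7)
The plan is to deduce Corollary \ref{meh_1} as a direct specialization of Corollary \ref{adj_cor} using Lemma \ref{balanced_lemma}. Corollary \ref{adj_cor} is itself the case $\beta(v) = 0$ of Theorem \ref{main_theorem}, so the constant $\beta$-sequence automatically satisfies the hypothesis of Lemma \ref{balanced_lemma} that any two vertices on the same level have equal $\beta$-entries. Consequently, on a balanced tree, the assigned rational function $\mathcal{G}(v, x)$ depends only on the level of $v$, which legitimizes denoting the common value on level $j$ by $\mathcal{G}_j$.

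Next, I would derive the recurrence Eq.\ (\ref{cool_g}) by specializing Eq.\ (\ref{adj_cor_rec}). For any leaf $v \in V(T, l(T))$ the sum over $c(v)$ is empty, so Eq.\ (\ref{adj_cor_rec}) collapses to $\mathcal{G}_{l(T)} = x$. For a vertex $v \in V(T, j)$ with $1 \le j \le l(T) - 1$, the set $c(v)$ contains exactly $c_j$ vertices, all lying on level $j+1$ and all carrying the common value $\mathcal{G}_{j+1}$; hence Eq.\ (\ref{adj_cor_rec}) becomes $\mathcal{G}_j = x - c_j / \mathcal{G}_{j+1}$, which is precisely Eq.\ (\ref{cool_g}).

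Finally, I would obtain Eq.\ (\ref{meh_formula_1}) by regrouping the product in Eq.\ (\ref{adj_cor_res}) according to level membership: since $V(T)$ is the disjoint union of the level sets $V(T, j)$ for $j = 1, \ldots, l(T)$, and since each factor inside a given level equals $\mathcal{G}_j$, we get
\begin{align*}
    P(T, x) = \prod_{v \in V(T)} \mathcal{G}(v, x) = \prod_{j=1}^{l(T)} \prod_{v \in V(T, j)} \mathcal{G}_j = \prod_{j=1}^{l(T)} \mathcal{G}_j^{n(T, j)} .
\end{align*}

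There is no genuine obstacle in this argument: all the substantive work has already been carried out in Theorem \ref{main_theorem} and Lemma \ref{balanced_lemma}. The only thing worth being careful about is the base case of the recurrence, namely verifying that the convention of an empty sum in Eq.\ (\ref{adj_cor_rec}) yields $\mathcal{G}_{l(T)} = x$ and is compatible with the stated initial condition; once this is noted, the rest is a mechanical reindexing of the product.
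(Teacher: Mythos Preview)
Your proposal is correct and matches the paper's approach exactly: the paper simply states that Corollary \ref{meh_1} follows immediately from Corollary \ref{adj_cor} by implementing the consequences of Lemma \ref{balanced_lemma}, and you have written out precisely that deduction in detail.
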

\begin{corollary}\label{meh_2}
    Let $T$ be any nontrivial balanced tree. If $\mathcal{H}_1, \mathcal{H}_2, \ldots, \mathcal{H}_{l(T)} \in \mathbb{Z}(x)$ is a sequence of rational functions which is recursively defined via
    \begin{align}
        \label{problematic_eq}\mathcal{H}_{l(T)} &= x - 1 ,\\
        \nonumber\mathcal{H}_j &= x - (c_j + 1) - \dfrac{c_j}{\mathcal{H}_{j+1}} \qquad (\forall j \in \overline{2, l(T)-1}) ,\\
        \nonumber\mathcal{H}_1 &= x - c_1 - \dfrac{c_1}{\mathcal{H}_2} ,
    \end{align}
    then
    \begin{align}\label{meh_formula_2}
        Q(T, x) &= \prod_{j=1}^{l(T)} \mathcal{H}_j^{n(T, j)} .
    \end{align}
\end{corollary}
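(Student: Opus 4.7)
The plan is to derive Corollary \ref{meh_2} as a direct specialization of Corollary \ref{lap_cor} together with Lemma \ref{balanced_lemma}. First I would apply Corollary \ref{lap_cor}, which yields $Q(T, x) = \prod_{v \in V(T)} \mathcal{H}(v, x)$ where $\mathcal{H}(v, x)$ is the rational function assigned to $v$ via Eq.\ (\ref{lap_cor_rec}). Since Eq.\ (\ref{lap_cor_rec}) is the specialization of Eq.\ (\ref{recursive}) obtained by setting $\beta(v) = d(v)$, I would then verify that this $\beta$-sequence satisfies the hypothesis of Lemma \ref{balanced_lemma}: in a balanced tree, vertices on the same level have the same degree, so $\beta$ is constant on each level.

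Once this is established, Lemma \ref{balanced_lemma} guarantees that the function $\mathcal{H}(v, x)$ depends only on the level of $v$, justifying the notation $\mathcal{H}_j$ for $v \in V(T, j)$. The next step is to verify that these level-indexed functions satisfy precisely the three-case recursion stated in the corollary. This splits naturally according to where a vertex sits in the tree:
\begin{itemize}
    \item If $v \in V(T, l(T))$, then $v$ is a leaf with $c(v) = \varnothing$, and since $T$ is nontrivial $v$ has a parent, so $d(v) = 1$ and Eq.\ (\ref{lap_cor_rec}) collapses to $\mathcal{H}_{l(T)} = x - 1$, matching Eq.\ (\ref{problematic_eq}).
    \item If $v \in V(T, j)$ for $2 \le j \le l(T) - 1$, then $v$ has one parent and $c_j$ children, so $d(v) = c_j + 1$; substituting and using $\mathcal{H}(w, x) = \mathcal{H}_{j+1}$ for each $w \in c(v)$ gives the middle recursion.
    \item If $v = r(T)$, then $v$ has no parent, so $d(r(T)) = c_1$, and the same substitution gives the last equation.
\end{itemize}

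Finally, I would regroup the product $\prod_{v \in V(T)} \mathcal{H}(v, x)$ by levels: since each level $j$ contributes the same factor $\mathcal{H}_j$ exactly $n(T, j)$ times, we immediately obtain Eq.\ (\ref{meh_formula_2}).

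I do not expect any real obstacle here; the argument is essentially bookkeeping on top of the two results already proved. The only subtlety worth mentioning is the role of the nontriviality assumption, which ensures that leaves actually have degree $1$ rather than $0$ (guaranteeing the validity of the initial condition $\mathcal{H}_{l(T)} = x - 1$) and that the level-wise recursion is nonempty and internally consistent. The degree computation for the root, which differs from that of internal vertices precisely because the root lacks a parent, is the reason the corollary must list the $j = 1$ case separately from the middle range $2 \le j \le l(T) - 1$.
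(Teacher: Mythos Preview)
Your proposal is correct and follows essentially the same approach as the paper, which simply states that Corollary \ref{meh_2} follows immediately from Corollary \ref{lap_cor} by implementing the consequences of Lemma \ref{balanced_lemma}. Your write-up is in fact more detailed than the paper's one-line justification, explicitly spelling out the degree computation in each of the three cases and the role of the nontriviality assumption.
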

\begin{remark}
    Corollary \ref{meh_2} does not hold if $T$ is the trivial balanced tree, so the condition that the tree is nontrivial cannot be omitted. This is due to the fact that the trivial tree is such that all of its leaves have no parent, which is the only case when Eq.~(\ref{problematic_eq}) does not hold.
\end{remark}

Corollaries \ref{meh_1} and \ref{meh_2} follow immediately from Corollaries \ref{adj_cor} and \ref{lap_cor} by implementing the elaborated consequences of Lemma \ref{balanced_lemma}. In fact, Eqs.\ (\ref{meh_formula_1}) and (\ref{meh_formula_2}) can be transformed into simpler forms which avoid rational functions and use only polynomials. This is demonstrated in the following theorem.

\begin{theorem}\label{second_th}
For an arbitrary balanced tree $T$, let $W_0, W_1, W_2, \ldots, W_{l(T)} \in \mathbb{Z}[x]$ be a sequence of polynomials defined via the recurrence relation
\begin{align}\label{second_th_rec}
\begin{split}
    W_0 &= 1 ,\\
    W_1 &= x ,\\
    W_j &= x W_{j-1} - c_{l(T)+1-j} W_{j-2} \qquad (\forall j \in \overline{2, l(T)}) .
\end{split}
\end{align}
We then have
\begin{align}
    \label{p_balanced}P(T, x) &= \prod_{j = 1}^{l(T)} W_j^{n(T, l(T) + 1 - j) - n(T, l(T) - j)} .
\end{align}
Also, if $T$ is nontrivial, then for the sequence $Y_0, Y_1, Y_2, \ldots, Y_{l(T)} \in \mathbb{Z}[x]$ of polynomials determined via
\begin{align*}
    Y_0 &= 1 ,\\
    Y_1 &= x - 1 ,\\
    Y_j &= (x - c_{l(T)+1-j} - 1) Y_{j-1} - c_{l(T)+1-j} Y_{j-2} \qquad (\forall j \in \overline{2, l(T)-1}) ,\\
    Y_{l(T)} &= (x - c_1) Y_{l(T)-1} - c_1 Y_{l(T)-2} ,
\end{align*}
we get
\begin{align}
    \label{q_balanced}Q(T, x) &= \prod_{j = 1}^{l(T)} Y_j^{n(T, l(T) + 1 - j) - n(T, l(T) - j)} .
\end{align}
\end{theorem}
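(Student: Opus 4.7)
The plan is to reduce Theorem \ref{second_th} to Corollaries \ref{meh_1} and \ref{meh_2} by rewriting every assigned rational function as a ratio of two consecutive polynomials from the $W$-sequence (respectively, the $Y$-sequence). Concretely, I will prove by induction on $k$ the identities
$$\mathcal{G}_{l(T)+1-k} = \frac{W_k}{W_{k-1}} \qquad \text{and} \qquad \mathcal{H}_{l(T)+1-k} = \frac{Y_k}{Y_{k-1}}$$
for $k = 1, 2, \ldots, l(T)$. Once these are in hand, substituting into Eqs.\ (\ref{meh_formula_1}) and (\ref{meh_formula_2}) and reindexing via $i = l(T)+1-j$ turns each product into a telescoping ratio; since $W_0 = Y_0 = 1$ and $n(T, 0) = 0$, collecting exponents by polynomial index yields precisely Eqs.\ (\ref{p_balanced}) and (\ref{q_balanced}).

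For the adjacency case, the base $k = 1$ is just $\mathcal{G}_{l(T)} = x = W_1/W_0$. For the inductive step, assuming $\mathcal{G}_{l(T)+1-k} = W_k/W_{k-1}$, Eq.\ (\ref{cool_g}) gives
$$\mathcal{G}_{l(T)-k} = x - \frac{c_{l(T)-k}}{\mathcal{G}_{l(T)+1-k}} = \frac{x W_k - c_{l(T)-k} W_{k-1}}{W_k},$$
whose numerator equals $W_{k+1}$ by the defining recurrence (\ref{second_th_rec}) once one notes that $l(T)+1-(k+1) = l(T)-k$. So the verification is purely mechanical after aligning the indices.

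The Laplacian case has the same structure but is slightly more delicate because the recursion for $\mathcal{H}_j$ is defined by three separate rules (leaves, interior, root), and each must be matched with the corresponding rule in the definition of $Y_j$. I would verify the leaf case $k=1$ by matching $\mathcal{H}_{l(T)} = x - 1$ with $Y_1/Y_0$, then the interior range $2 \le k \le l(T)-1$ using the middle branches of both recurrences, and finally the root case $k = l(T)$, where the special form $Y_{l(T)} = (x - c_1) Y_{l(T)-1} - c_1 Y_{l(T)-2}$ matches $\mathcal{H}_1 = x - c_1 - c_1/\mathcal{H}_2$. The nontriviality hypothesis on $T$ guarantees $l(T) \ge 2$, so that the leaf initialization $\mathcal{H}_{l(T)} = x - 1$ from Eq.\ (\ref{problematic_eq}) is actually in force; invertibility of each $\mathcal{H}_j$ in $\mathbb{Z}(x)$, needed to justify the inversions in the induction, follows from a degree argument analogous to Lemma \ref{lemma_start}.

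The main obstacle is bookkeeping rather than any conceptual difficulty: one has to track two dual indices, namely the level index $j$ which increases from the root downward and the depth index $k = l(T)+1-j$ which increases from the leaves upward, both in the ratio identity and in the subsequent reindexing of the product. Once that is set up carefully, the exponent $n(T, l(T)+1-i) - n(T, l(T)-i)$ in Eqs.\ (\ref{p_balanced}) and (\ref{q_balanced}) emerges immediately from separating the numerator and denominator contributions of each $W_i$ (respectively $Y_i$) across the telescoping product and invoking $n(T,0)=0$ together with $W_0 = Y_0 = 1$ to absorb the boundary terms.
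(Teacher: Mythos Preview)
Your proposal is correct and follows essentially the same approach as the paper's own proof: the paper likewise proves by induction that $\mathcal{G}_j = W_{l(T)+1-j}/W_{l(T)-j}$ (your identity with $k = l(T)+1-j$), substitutes into Eq.~(\ref{meh_formula_1}), and then telescopes using $W_0 = 1$ and $n(T,0) = 0$. The paper omits the Laplacian case as analogous, so your explicit handling of the three branches of the $\mathcal{H}$-recursion and the role of the nontriviality hypothesis is, if anything, more careful than the original.
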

\begin{proof}
    We will prove only Eq.\ (\ref{p_balanced}) due to the fact that Eq.\ (\ref{q_balanced}) is proved in an analogous manner by implementing Corollary \ref{meh_2} instead of Corollary \ref{meh_1}. First of all, we will show via mathematical induction that
    \begin{align}\label{g_quotient}
        \mathcal{G}_j = \dfrac{W_{l(T)+1-j}}{W_{l(T)-j}}
    \end{align}
    holds for all the $1 \le j \le l(T)$, where $\mathcal{G}_j$ is the sequence of assigned rational functions from Corollary \ref{meh_1}. It can immediately be seen that Eq.\ (\ref{g_quotient}) holds for the rational function assigned to the vertices on the last level, since we know that $\mathcal{G}_{l(T)} = x$, as well as $\dfrac{W_1}{W_0} = \dfrac{x}{1} = x$. Now suppose that Eq.\ (\ref{g_quotient}) is true for the rational function assigned to the vertices on level $j+1$, where $1 \le j \le l(T) - 1$. We will complete this part of the proof by showing that Eq.\ (\ref{g_quotient}) must necessarily be valid for the rational function assigned to the vertices on level $j$ as well.
    
    From Eq.\ (\ref{cool_g}) we quickly obtain
    \begin{align*}
        \mathcal{G}_j &= x - \dfrac{c_j}{\mathcal{G}_{j+1}}\\
        &= x - c_j \dfrac{W_{l(T)-j-1}}{W_{l(T)-j}} \\
        &= \dfrac{x W_{l(T)-j} - c_j W_{l(T)-j-1}}{W_{l(T)-j}}\\
        &= \dfrac{W_{l(T)+1-j}}{W_{l(T)-j}} ,
    \end{align*}
    which implies that Eq.\ (\ref{g_quotient}) holds for the rational function assigned to the vertices on level $j$, as needed.
    
    Taking into consideration Eq.\ (\ref{g_quotient}) along with Eq.\ (\ref{meh_formula_1}), we conclude that
    \begin{align*}
        P(T, x) &= \prod_{j=1}^{l(T)} \mathcal{G}_j^{n(T, j)} = \prod_{j=1}^{l(T)} \left( \dfrac{W_{l(T)+1-j}}{W_{l(T)-j}} \right)^{n(T, j)} = \dfrac{\displaystyle\prod_{j=1}^{l(T)} W_{l(T)+1-j}^{n(T, j)}}{\displaystyle\prod_{j=1}^{l(T)} W_{l(T)-j}^{n(T, j)}} .
    \end{align*}
    This means that
    \begin{align*}
        P(T, x) &= \dfrac{\displaystyle\prod_{j=1}^{l(T)} W_j^{n(T, l(T)+1-j)}}{\displaystyle\prod_{j=0}^{l(T)-1} W_j^{n(T, l(T)-j)}} .
    \end{align*}
    By using $W_0 = 1$, we further get
    \begin{align*}
        P(T, x) &= \dfrac{\displaystyle\prod_{j=1}^{l(T)} W_j^{n(T, l(T)+1-j)}}{W_0^{n(T, l(T))}\displaystyle\prod_{j=1}^{l(T)-1} W_j^{n(T, l(T)-j)}} = \dfrac{\displaystyle\prod_{j=1}^{l(T)} W_j^{n(T, l(T)+1-j)}}{\displaystyle\prod_{j=1}^{l(T)-1} W_j^{n(T, l(T)-j)}} .
    \end{align*}
    Finally, since $n(T, 0) = 0$, we reach
    \begin{align*}
        P(T, x) &= \dfrac{\displaystyle\prod_{j=1}^{l(T)} W_j^{n(T, l(T)+1-j)}}{W_{l(T)}^{n(T, 0)}\displaystyle\prod_{j=1}^{l(T)-1} W_j^{n(T, l(T)-j)}} = \dfrac{\displaystyle\prod_{j=1}^{l(T)} W_j^{n(T, l(T)+1-j)}}{\displaystyle\prod_{j=1}^{l(T)} W_j^{n(T, l(T)-j)}}\\
        &= \prod_{j=1}^{l(T)} W_j^{n(T, l(T)+1-j) - n(T, l(T)-j)} .
    \end{align*}
\end{proof}

Theorem \ref{second_th} has some interesting direct consequences. For example, Eq.\ (\ref{p_balanced}) makes it easy to determine $\sigma^{*}(T)$, as shown in the next corollary.

\begin{corollary}\label{phi_cor}
    Let $T$ be an arbitrary balanced tree and let $\Phi \subseteq \mathbb{N}$ be the set
    \begin{align*}
        \Phi &= \{ l(T) \} \cup \{ j \in \mathbb{N} \colon 1 \le j \le l(T) - 1, c_{l(T)-j} > 1 \} .
    \end{align*}
    Then
    \begin{align}
        \label{phi_formula}\sigma^{*}(T) &= \bigcup_{j \in \Phi} \{ x \in \mathbb{R} \colon W_j(x) = 0 \} .
    \end{align}
\end{corollary}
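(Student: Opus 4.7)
The plan is to deduce Corollary \ref{phi_cor} directly from the factorization given by Eq.\ (\ref{p_balanced}) in Theorem \ref{second_th}. Since $A(T)$ is a real symmetric matrix, $\sigma^{*}(T)$ consists of the distinct real roots of $P(T, x)$, so it suffices to show that the set of real roots of the product $\prod_{j=1}^{l(T)} W_j(x)^{n(T, l(T)+1-j) - n(T, l(T)-j)}$ coincides with the right-hand side of Eq.\ (\ref{phi_formula}).

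The first and main step is to analyze the exponent $n(T, l(T)+1-j) - n(T, l(T)-j)$. For $j = l(T)$ it equals $n(T, 1) - n(T, 0) = 1$. For $1 \le j \le l(T) - 1$, the balanced-tree identity $n(T, i+1) = c_i \, n(T, i)$ reduces this exponent to
\[ n(T, l(T)-j)\bigl(c_{l(T)-j} - 1\bigr). \]
Since $n(T, l(T)-j) \ge 1$ throughout the relevant range, the exponent is strictly positive precisely when $c_{l(T)-j} > 1$, and vanishes otherwise. Comparing with the definition of $\Phi$, the nontrivial factors in Eq.\ (\ref{p_balanced}) are exactly those indexed by $j \in \Phi$, while indices $j \notin \Phi$ contribute the constant $W_j^{0} = 1$ and drop out of the product. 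Hence Eq.\ (\ref{p_balanced}) may be rewritten as $P(T, x) = \prod_{j \in \Phi} W_j(x)^{e_j}$ with every $e_j \ge 1$.

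From this reformulation both inclusions in Eq.\ (\ref{phi_formula}) follow routinely. Any $\lambda \in \sigma^{*}(T)$ is a real root of $P(T, x)$, so at least one factor $W_j(\lambda)$ with $j \in \Phi$ must vanish. Conversely, any real $\lambda$ with $W_j(\lambda) = 0$ for some $j \in \Phi$ satisfies $P(T, \lambda) = 0$ because the matching exponent $e_j$ is positive, whence $\lambda \in \sigma^{*}(T)$. The only serious bookkeeping hazard I foresee, and thus the main obstacle, is keeping straight that $W_j$ is indexed in the reverse direction to the tree levels, so that $j \in \Phi$ really does line up with the factor whose exponent is $n(T, l(T)-j)(c_{l(T)-j} - 1)$; beyond this, nothing more than Theorem \ref{second_th} and the symmetry of $A(T)$ is required.
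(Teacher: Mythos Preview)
Your proof is correct and follows essentially the same route as the paper's: both analyze the exponent $n(T,l(T)+1-j)-n(T,l(T)-j)$ via the identity $n(T,l(T)+1-j)=c_{l(T)-j}\,n(T,l(T)-j)$ to show it is positive exactly when $j\in\Phi$, and then read off $\sigma^{*}(T)$ from the factorization in Eq.~(\ref{p_balanced}). Your write-up is in fact slightly more explicit in justifying why zero exponents drop out and why the remaining roots are real.
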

\begin{proof}
    From Eq.\ (\ref{p_balanced}) we have that $P(T, x)$ can be represented as a product of certain elements of the $W_j$ sequence. Moreover, some $W_j$ appears as a factor of $P(T, x)$ if and only if
    \begin{alignat}{2}
        \nonumber && n(T, l(T)+1-j) - n(T, l(T)-j) &> 0\\
        \label{phi_condition} \iff \quad && n(T, l(T)+1-j) &> n(T, l(T)-j) .
    \end{alignat}
    For $j = l(T)$, this condition is always satisfied, due to the fact that $n(T, 1) = 1$ and $n(T, 0) = 0$. If $1 \le j \le l(T) - 1$, we then have
    \begin{align*}
        n(T, l(T) + 1 - j) = c_{l(T) - j} n(T, l(T) - j) ,
    \end{align*}
    which means that Eq.\ (\ref{phi_condition}) is equivalent to $c_{l(T)-j} > 1$. Thus, we get that $W_j$ appears as a factor of $P(T, x)$ if and only if $j \in \Phi$. This clearly means that some real number belongs to $\sigma^{*}(T) \subseteq \mathbb{R}$ if and only if it is a root of at least one of the polynomials $W_j$ where $j \in \Phi$. The noted observation promptly leads to Eq.~(\ref{phi_formula}).
\end{proof}

\begin{remark}
    It is worth noting that Eq.\ (\ref{q_balanced}) could be used in order to make a similar conclusion regarding the set of distinct eigenvalues of $L(T)$ and the $Y_j$ sequence of polynomials.
\end{remark}

\subsection{Spectral properties of Bethe trees}\label{subsc_bethe}

In this subsection we will implement Theorem \ref{second_th} and Corollary \ref{phi_cor} on the Bethe tree $\mathcal{B}_{d, k}$ in order to compute its characteristic polynomial $P(\mathcal{B}_{d, k}, x)$, set of distinct eigenvalues $\sigma^{*}(\mathcal{B}_{d, k})$ and energy $\mathcal{E}(\mathcal{B}_{d, k})$, for all the $d \ge 2$ and $k \ge 1$. Let $E_0(x, a), E_1(x, a), E_2(x, a), \ldots$ be the sequence of polynomials defined via the recurrence relation
\begin{align}\label{dickson_rec}
\begin{split}
    E_0(x, a) &= 1 ,\\
    E_1(x, a) &= x ,\\
    E_j(x, a) &= x E_{j-1}(x, a) - a E_{j-2}(x, a) \qquad (\forall j \ge 2) ,
\end{split}
\end{align}
where $a \in \mathbb{R}$ is a fixed constant. We shall call these polynomials the Dickson polynomials of the second kind, as done so in \cite[pp.\ 9--10]{dickson_polynomials}. There are many known properties regarding the polynomials from this sequence, which is very convenient for us, given the fact that
\begin{align}\label{dickson_realization}
    W_j = E_j(x, d-1) \qquad (\forall j = \overline{0, l(T)}) ,
\end{align}
where $W_j$ is the corresponding sequence from Theorem \ref{second_th} when it is applied on the balanced tree $T = \mathcal{B}_{d, k}$. This is not difficult to see, since $T = \mathcal{B}_{d, k}$ implies $c_j = d-1$ for all the $1 \le j \le l(T) - 1$, which immediately makes the recurrence relation Eq.~(\ref{second_th_rec}) defining $W_0, W_1, \ldots, W_{l(T)}$ equivalent to the recurrence relation Eq.~(\ref{dickson_rec}). This observation leads to the following theorem.

\begin{theorem}\label{bethe_th}
    For any Bethe tree $\mathcal{B}_{d, k}$, where $d \ge 2$ and $k \ge 1$, we have
    \begin{align}\label{bethe_p}
        P(\mathcal{B}_{d, k}, x) &= E_k(x, d-1) \prod_{j = 1}^{k-1} E_j(x, d-1)^{(d-2)(d-1)^{k-1-j}} .
    \end{align}
\end{theorem}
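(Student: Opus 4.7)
The plan is to apply Theorem \ref{second_th} directly to the balanced tree $T = \mathcal{B}_{d, k}$ and then compute the exponents explicitly using the structural regularity of a Bethe tree. Since the statement $W_j = E_j(x, d-1)$ is already recorded in Eq.\ (\ref{dickson_realization}) (it follows because $c_j = d-1$ throughout, so the recurrence in Eq.\ (\ref{second_th_rec}) collapses into Eq.\ (\ref{dickson_rec})), the substantive task reduces to identifying the multiplicity with which each $W_j$ appears in Eq.\ (\ref{p_balanced}).

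First I would note that $l(\mathcal{B}_{d,k}) = k$ and that a Bethe tree has exactly $(d-1)^{m-1}$ vertices on each level $m$ with $1 \le m \le k$, while $n(\mathcal{B}_{d,k}, 0) = 0$ by convention. Substituting these counts into Eq.\ (\ref{p_balanced}), the exponent attached to $W_j$ becomes $n(\mathcal{B}_{d,k}, k+1-j) - n(\mathcal{B}_{d,k}, k-j)$. I would then split the product into the top index $j = k$ and the remaining range $1 \le j \le k-1$, handling the $j=k$ term separately because it involves $n(\mathcal{B}_{d,k}, 0)$.

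For $j = k$, a direct evaluation gives exponent $1 - 0 = 1$, producing the isolated factor $E_k(x, d-1)$. For $1 \le j \le k-1$, both indices $k+1-j$ and $k-j$ lie in $\{1, \ldots, k\}$, so the exponent equals
\begin{align*}
    (d-1)^{k-j} - (d-1)^{k-1-j} = (d-2)(d-1)^{k-1-j},
\end{align*}
which matches precisely the exponent appearing in the target formula Eq.\ (\ref{bethe_p}). Assembling these pieces yields the desired expression for $P(\mathcal{B}_{d,k}, x)$.

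There is no real obstacle here: once Theorem \ref{second_th} and the identification $W_j = E_j(x, d-1)$ are in place, the result is a straightforward bookkeeping computation with the level sizes of $\mathcal{B}_{d,k}$. The only point requiring a moment of care is keeping the index $j = k$ separate so that the factor $n(\mathcal{B}_{d,k}, 0) = 0$ is handled correctly, since otherwise one might be tempted to write the exponent uniformly as $(d-2)(d-1)^{k-1-j}$, which fails at $j = k$ where it would give a negative power of $d-1$.
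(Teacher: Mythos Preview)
Your proposal is correct and follows essentially the same approach as the paper: both apply Theorem~\ref{second_th} with the identification $W_j = E_j(x, d-1)$ from Eq.~(\ref{dickson_realization}), separate the $j = k$ term, and then compute the exponent for $1 \le j \le k-1$ via $n(T, k+1-j) - n(T, k-j) = (d-1)^{k-j} - (d-1)^{k-1-j} = (d-2)(d-1)^{k-1-j}$. Your additional remark about why the $j=k$ case must be isolated is a nice piece of clarification but does not change the argument.
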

\begin{proof}
    If we compare Eq.\ (\ref{bethe_p}) to Eq.\ (\ref{p_balanced}) and use Eq.\ (\ref{dickson_realization}), as well as $k = l(T)$, it becomes sufficient to show that $n(T, 1) - n(T, 0) = 1$ and
    \begin{align*}
        n(T, k + 1 - j) - n(T, k - j) &= (d-2)(d-1)^{k-1-j} ,
    \end{align*}
    for all of the $1 \le j \le k-1$. The first fact is obvious, while the second follows directly from $n(T, k+1-j) = (d-1)^{k-j}$ and $n(T, k-j) = (d-1)^{k-1-j}$.
\end{proof}

We can now use Corollary \ref{phi_cor} together with some known properties about the Dickson polynomials of the second kind in order to determine $\sigma^{*}(\mathcal{B}_{d, k})$. The full procedure is given in the next corollary.

\begin{corollary}
    For any Bethe tree $\mathcal{B}_{d, k}$, where $d \ge 3$ and $k \ge 1$, we have
    \begin{align}\label{sigma_bethe_3}
        \sigma^{*}(\mathcal{B}_{d, k}) &= \left\{ 2\sqrt{d-1} \cos\left(\frac{h}{j+1}\pi\right) \colon 1 \le h \le j \le k \right\} .
    \end{align}
    Also, for any $k \ge 1$, we have
    \begin{align}\label{sigma_bethe_2}
        \sigma^{*}(\mathcal{B}_{2, k}) &= \left\{ 2\cos\left(\frac{h}{k+1}\pi\right) \colon 1 \le h \le k \right\} .
    \end{align}
\end{corollary}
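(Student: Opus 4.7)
\medskip
\noindent\textbf{Proof proposal.}\quad The plan is to feed the two cases $d\ge 3$ and $d=2$ separately into Corollary~\ref{phi_cor} after identifying the roots of the Dickson polynomials of the second kind $E_j(x,d-1)$. Since $T=\mathcal{B}_{d,k}$ satisfies $c_j=d-1$ for every $1\le j\le l(T)-1=k-1$, the index set $\Phi$ from Corollary~\ref{phi_cor} becomes $\Phi=\{1,2,\dots,k\}$ when $d\ge 3$ (because then $c_{l(T)-j}=d-1>1$ for every admissible $j$), and $\Phi=\{k\}$ when $d=2$ (because then $c_{l(T)-j}=1$ fails the inequality for every $1\le j\le k-1$). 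Combining this with Eq.~(\ref{dickson_realization}), Corollary~\ref{phi_cor} gives
\[
\sigma^{*}(\mathcal{B}_{d,k})=\bigcup_{j=1}^{k}\{x\in\mathbb{R}\colon E_j(x,d-1)=0\}\quad(d\ge 3),\qquad \sigma^{*}(\mathcal{B}_{2,k})=\{x\in\mathbb{R}\colon E_k(x,1)=0\}.
\]

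\medskip
\noindent
The remaining step is therefore to exhibit the roots of $E_j(x,a)$ for $a>0$. I would do this by making the trigonometric substitution $x=2\sqrt{a}\cos\theta$ in the defining recurrence Eq.~(\ref{dickson_rec}); a routine induction then gives the closed form
\[
E_j(2\sqrt{a}\cos\theta,\,a)=a^{j/2}\,\frac{\sin((j+1)\theta)}{\sin\theta}\qquad(\forall j\ge 0),
\]
which is the standard Chebyshev-type identity for Dickson polynomials of the second kind that is recorded in the reference already cited in the paper. Since the map $\theta\mapsto 2\sqrt{a}\cos\theta$ is a bijection from $(0,\pi)$ onto $(-2\sqrt{a},2\sqrt{a})$, and since $E_j$ has degree $j$, the $j$ real roots of $E_j(\cdot,a)$ are precisely the numbers $2\sqrt{a}\cos(h\pi/(j+1))$ for $h=1,2,\dots,j$.

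\medskip
\noindent
Substituting $a=d-1$ and taking the union over $j\in\Phi$ then immediately yields Eq.~(\ref{sigma_bethe_3}) in the case $d\ge 3$ and Eq.~(\ref{sigma_bethe_2}) in the case $d=2$. The only mildly delicate point is the passage from "roots of $E_j$" to "$j$ distinct real roots listed by the formula"; this is where the trigonometric closed form is essential, so verifying that identity (and hence that $E_j$ has no complex roots and no repeated real roots when $a>0$) is the main obstacle, though it is entirely routine once the recurrence is rewritten through $x=2\sqrt{a}\cos\theta$.
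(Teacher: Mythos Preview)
Your proposal is correct and follows essentially the same approach as the paper: apply Corollary~\ref{phi_cor} after identifying $\Phi$, and use the known roots of the Dickson polynomials $E_j(x,d-1)$. The only cosmetic differences are that the paper cites the Dickson roots directly from the reference rather than sketching the Chebyshev-type derivation, and for $d=2$ it bypasses Corollary~\ref{phi_cor} by recognizing $\mathcal{B}_{2,k}$ as a path graph and quoting its spectrum; your uniform treatment via $\Phi=\{k\}$ works just as well.
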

\begin{proof}
    The Bethe tree $\mathcal{B}_{2, k}$ represents a path graph composed of $k$ vertices. It is known (see, for example, \cite[pp.\ 18]{spectra_of_graphs}) that the spectrum of this graph is composed of the simple eigenvalues $2\cos\left(\frac{1}{k+1}\pi\right), 2\cos\left(\frac{2}{k+1}\pi\right), \ldots,  2\cos\left(\frac{k}{k+1}\pi\right)$, which proves Eq.\ (\ref{sigma_bethe_2}).
    
    On the other hand, it is also known that $E_j(x, a)$ has $j$ distinct simple roots \linebreak $2\sqrt{a}\cos\left(\frac{1}{j+1}\pi\right), 2\sqrt{a}\cos\left(\frac{2}{j+1}\pi\right), \ldots, 2\sqrt{a}\cos\left(\frac{j}{j+1}\pi\right)$ (see, for example, \cite[pp.\ \linebreak 9--10]{dickson_polynomials}). By implementing Corollary \ref{phi_cor}, we get that $\Phi = \{1, 2, \ldots, k \}$, since $c_j > 1$ for all of the $1 \le j \le k - 1$, provided $d \ge 3$. This means that for $d \ge 3 $, we have that $\sigma^{*}(\mathcal{B}_{d, k})$ is composed of the real numbers which are a root of at least one polynomial from the sequence $E_1(x, d-1), E_2(x, d-1), \ldots, E_k(x, d-1)$. However, the set of such real numbers is obviously equal to the set represented in Eq.~(\ref{sigma_bethe_3}).
\end{proof}

To finish our investigation of spectral properties regarding the Bethe trees, we shall determine the energy of $\mathcal{B}_{d, k}$. Our key result regarding this matter is presented in the following theorem.

\begin{theorem}\label{bethe_main}
    For an arbitrary Bethe tree $\mathcal{B}_{d, k}$, where $d \ge 3$ and $k \ge 1$, we have
    \begin{align}\label{bethe_energy_1}
        \mathcal{E}(\mathcal{B}_{d, k}) &= \sum_{j = 1}^{k-1} f_j (d-1)^{k-\frac{1}{2}-j} ,
    \end{align}
    where
    \begin{align*}
        f_j &= \begin{cases}
        2 \csc\left( \dfrac{\pi}{2j+4} \right) - 2\cot\left( \dfrac{\pi}{2j+2} \right), & 2 \nmid j ,\\
        2 \cot\left( \dfrac{\pi}{2j+4} \right) - 2\csc\left( \dfrac{\pi}{2j+2} \right), & 2 \mid j .
    \end{cases}
    \end{align*}
    Also, for any $k \ge 1$, we have
    \begin{align}\label{bethe_energy_2}
        \mathcal{E}(\mathcal{B}_{2, k}) &= \begin{cases}
            2 \left( \cot\left( \dfrac{\pi}{2k+2} \right) - 1 \right) , & 2 \nmid k ,\\
            2 \left( \csc\left( \dfrac{\pi}{2k+2} \right) - 1 \right) , & 2 \mid k .
        \end{cases}
    \end{align}
\end{theorem}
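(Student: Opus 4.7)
\bigskip\noindent
\emph{Proof proposal.}\quad The plan is to combine Theorem~\ref{bethe_th}, which factors $P(\mathcal{B}_{d,k}, x)$ in terms of the Dickson polynomials $E_j(x, d-1)$, with the explicit roots of those polynomials, and then evaluate the resulting trigonometric sums in closed form via an index-shift reorganization.

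First, I would introduce the auxiliary quantity $S_j := \sum_{h=1}^{j}\bigl|\cos(h\pi/(j+1))\bigr|$. Since the roots of $E_j(x, d-1)$ are $2\sqrt{d-1}\,\cos(h\pi/(j+1))$ for $1 \le h \le j$, the quantity $S_j$ packages the energy contribution of each $E_j$ factor up to the constant $2\sqrt{d-1}$. Using the symmetry $\cos((j+1-h)\pi/(j+1)) = -\cos(h\pi/(j+1))$, the sum collapses to twice the sum of cosines over the first half, which via the standard identity
\[
\sum_{h=1}^{n}\cos(h\theta) \;=\; \frac{\sin(n\theta/2)\,\cos((n+1)\theta/2)}{\sin(\theta/2)}
\]
together with the half-angle collapses $m\alpha + (m+1)\alpha = \pi/2$ when $j = 2m$ and $(m+1)\beta = \pi/4$ when $j = 2m+1$ (where $\alpha = \beta = \pi/(2j+2)$) reduces to the clean forms $S_j = \csc(\pi/(2j+2)) - 1$ for $j$ even and $S_j = \cot(\pi/(2j+2)) - 1$ for $j$ odd.

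The case $d = 2$ is then immediate: since $\mathcal{B}_{2,k}$ is a path on $k$ vertices, Eq.~\eqref{sigma_bethe_2} gives $\mathcal{E}(\mathcal{B}_{2,k}) = 2 S_k$, from which Eq.~\eqref{bethe_energy_2} follows by substituting the closed form of $S_k$. For $d \ge 3$, Theorem~\ref{bethe_th} together with the multiplicities $1$ and $(d-2)(d-1)^{k-1-j}$ of the factors $E_k$ and $E_j$ ($1 \le j \le k-1$) yields
\[
\mathcal{E}(\mathcal{B}_{d,k}) \;=\; 2(d-1)^{1/2}\,S_k \;+\; 2(d-2)\sum_{j=1}^{k-1}(d-1)^{k-\frac{1}{2}-j}\,S_j.
\]
To massage this into Eq.~\eqref{bethe_energy_1}, I would write $d-2 = (d-1) - 1$, split the sum into two parts, and shift the index $j \mapsto j-1$ in the $(d-1)^{k+\frac{1}{2}-j}$ piece. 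The boundary term at $j = 0$ vanishes because $S_1 = |\cos(\pi/2)| = 0$, and the leftover $2(d-1)^{1/2}\,S_k$ contribution folds into the shifted sum at $j = k-1$, producing the telescoping-type identity
\[
\mathcal{E}(\mathcal{B}_{d,k}) \;=\; \sum_{j=1}^{k-1} 2(d-1)^{k-\frac{1}{2}-j}\,(S_{j+1} - S_j).
\]
Matching $f_j = 2(S_{j+1} - S_j)$ against the stated piecewise formula is then a routine parity check: when $j$ is odd, $j+1$ is even and one obtains $f_j = 2\csc(\pi/(2j+4)) - 2\cot(\pi/(2j+2))$; when $j$ is even the complementary identity holds.

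The main obstacle will be the trigonometric evaluation of $S_j$: one must handle the even and odd cases separately and keep careful track of which of $\csc$ and $\cot$ appears on which side of the difference $S_{j+1} - S_j$, so that the piecewise definition of $f_j$ matches up correctly. The surrounding algebraic reindexing requires some care at the boundary of the sum but is otherwise routine.
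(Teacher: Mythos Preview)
Your proposal is correct and follows essentially the same route as the paper's own proof. The paper packages the same quantity as $\Psi(E_j(x,d-1))=2\sqrt{d-1}\,S_j$ in a separate lemma (Lemma~\ref{psi_lemma}), evaluates it via complex exponentials and a geometric series rather than the cosine-sum identity you cite, and then performs the identical $d-2=(d-1)-1$ split, index shift, and use of $S_1=0$ to reach the telescoped form $\sum_{j=1}^{k-1}(d-1)^{k-1-j}\bigl(\Psi(E_{j+1})-\Psi(E_j)\bigr)$.
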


In order to provide a proof of Theorem \ref{bethe_main}, we will rely on certain properties regarding the $E_j(x, a)$ polynomials. Let $\Psi(E_j(x, a))$ denote the sum of absolute values of all of the roots of $E_j(x, a)$. The upcoming auxiliary lemma solves the problem of giving an explicit formula for $\Psi(E_j(x, a))$.

\begin{lemma}\label{psi_lemma}
    For any Dickson polynomial of the second kind $E_j(x, a)$, we have
    \begin{align*}
        \Psi(E_j(x, a)) &= \begin{cases}
            2 \sqrt{a} \left( \cot\left( \dfrac{\pi}{2j+2} \right) - 1 \right) , & 2 \nmid j ,\\
            2 \sqrt{a} \left( \csc\left( \dfrac{\pi}{2j+2} \right) - 1 \right) , & 2 \mid j .
        \end{cases}
    \end{align*}
\end{lemma}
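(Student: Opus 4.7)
The plan is to reduce the computation of $\Psi(E_j(x,a))$ directly to a finite trigonometric sum and then evaluate that sum in closed form with a standard Dirichlet-type identity, splitting into the two parity cases that appear in the statement.

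The first step is to invoke what was already recalled in the proof of the preceding corollary: the Dickson polynomial $E_j(x,a)$ has exactly $j$ distinct simple roots $2\sqrt{a}\cos\!\left(\tfrac{h\pi}{j+1}\right)$ for $h=1,2,\dots,j$. Consequently
\[
\Psi(E_j(x,a)) \;=\; 2\sqrt{a}\,\sum_{h=1}^{j}\left|\cos\!\left(\tfrac{h\pi}{j+1}\right)\right|,
\]
and the lemma reduces entirely to evaluating $S_j := \sum_{h=1}^{j} \bigl|\cos(h\pi/(j+1))\bigr|$. Setting $n=j+1$, the identity $\cos((n-h)\pi/n) = -\cos(h\pi/n)$ makes the summand symmetric about $h=n/2$, so
\[
S_j \;=\; 2\sum_{h=1}^{\lfloor (n-1)/2\rfloor}\cos\!\left(\tfrac{h\pi}{n}\right),
\]
where in the case $n$ even (i.e.\ $j$ odd) the middle term $\cos(\pi/2)=0$ simply drops out and does not need to be treated separately.

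Next I would apply the standard closed form
\[
\sum_{h=1}^{m}\cos(h\theta) \;=\; \frac{\sin(m\theta/2)\,\cos((m+1)\theta/2)}{\sin(\theta/2)}
\]
with $\theta = \pi/n$ and $m=\lfloor(n-1)/2\rfloor$, and then simplify the numerator via the product-to-sum formula $\sin A\cos B = \tfrac12(\sin(A+B)+\sin(A-B))$. In the case $j$ even (so $n$ odd and $m=(n-1)/2$), the arguments satisfy $A+B=\pi/2$ and $A-B=-\pi/(2n)$, producing $\tfrac12(1-\sin(\pi/(2n)))$ and hence $S_j = \csc(\pi/(2j+2)) - 1$. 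In the case $j$ odd (so $n$ even and $m=n/2-1$), expanding $\sin(\pi/4-\pi/(2n))\cos(\pi/4)$ gives $\tfrac12(\cos(\pi/(2n))-\sin(\pi/(2n)))$ and hence $S_j = \cot(\pi/(2j+2)) - 1$. Multiplying each case by $2\sqrt{a}$ yields the two branches claimed in the lemma.

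I do not expect a genuine obstacle here: both the rootset of $E_j(x,a)$ and the closed form for a sum of cosines in arithmetic progression are completely standard. The only place that demands care is the parity bookkeeping when collapsing $S_j$ by symmetry and matching the resulting $\sin/\cos$ product to the claimed $\csc$ versus $\cot$ form, so I would present the two parity cases in parallel to make the cancellations $A+B=\pi/2$ (odd $n$) and $\cos(\pi/4)$ factor (even $n$) visible at the same time.
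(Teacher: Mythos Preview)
Your proposal is correct and follows essentially the same route as the paper: start from the explicit roots $2\sqrt{a}\cos(h\pi/(j+1))$, fold the sum by the symmetry $\cos((n-h)\pi/n)=-\cos(h\pi/n)$, and then evaluate the resulting finite cosine sum, splitting on the parity of $j$. The only cosmetic difference is that the paper computes $\sum_h \cos(h\pi/n)$ by passing to $\zeta=e^{i\pi/n}$ and summing a geometric progression, whereas you quote the Dirichlet-kernel closed form $\sum_{h=1}^m\cos(h\theta)=\sin(m\theta/2)\cos((m+1)\theta/2)/\sin(\theta/2)$ and simplify with a product-to-sum identity; the two calculations are equivalent and lead to the same $\cot$/$\csc$ split.
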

\begin{proof}
    As discussed earlier, we know that the roots of the polynomial $E_j(x, a)$ are $2\sqrt{a}\cos\left(\frac{1}{j+1}\pi\right), 2\sqrt{a}\cos\left(\frac{2}{j+1}\pi\right), \ldots, 2\sqrt{a}\cos\left(\frac{j}{j+1}\pi\right)$. Hence
    \begin{align*}
        \Psi(E_j(x, a)) &= \sum_{h = 1}^{j}\left| 2\sqrt{a} \cos\left(\frac{h}{j+1}\pi\right) \right| = 2\sqrt{a} \sum_{h = 1}^{j}\left|\cos\left(\frac{h}{j+1}\pi\right) \right| .
    \end{align*}
    Since $\cos\left(\frac{h}{j+1}\pi\right) > 0$ for $1 \le h < \frac{j+1}{2}$ and $\cos\left(\frac{h}{j+1}\pi\right) = -\cos\left(\frac{j+1-h}{j+1}\pi\right)$ for all the $\frac{j+1}{2} < h \le j$, we can rewrite the last expression as
    \begin{align*}
        \Psi(E_j(x, a)) &= 4\sqrt{a} \sum_{h = 1}^{\lfloor\frac{j}{2}\rfloor}\cos\left(\frac{h}{j+1}\pi\right) .
    \end{align*}
    Let us denote $\zeta = e^{\frac{i \pi}{j+1}}$. It is convenient to replace $\cos\left(\frac{h}{j+1}\pi\right)$ with $\frac{\zeta^h + \zeta^{-h}}{2}$. This gives us:
    \begin{align*}
        \Psi(E_j(x, a)) &= 4\sqrt{a} \sum_{h = 1}^{\lfloor\frac{j}{2}\rfloor} \frac{\zeta^h + \zeta^{-h}}{2}\\
        &= 2\sqrt{a} \sum_{h = 1}^{\lfloor\frac{j}{2}\rfloor} \left(\zeta^h + \zeta^{-h}\right)\\
        &= 2\sqrt{a} \left( \left( \sum_{h = -\lfloor\frac{j}{2}\rfloor}^{\lfloor\frac{j}{2}\rfloor}\zeta^h \right) - 1 \right) .
    \end{align*}
    Since $\zeta \neq 1$, we can use the standard formula for summing a geometric progression in order to get
    \begin{align*}
        \Psi(E_j(x, a)) &= 2\sqrt{a} \left( \dfrac{\sum_{h = 0}^{2 \lfloor\frac{j}{2}\rfloor}\zeta^h}{\zeta^{\lfloor\frac{j}{2}\rfloor}} - 1 \right)\\
        &= 2\sqrt{a} \left( \dfrac{\zeta^{2 \lfloor\frac{j}{2}\rfloor + 1} - 1}{\zeta^{\lfloor\frac{j}{2}\rfloor}(\zeta-1)} - 1 \right)\\
        &= 2\sqrt{a} \left( \dfrac{\zeta^{\lfloor\frac{j}{2}\rfloor + 1} - \zeta^{-\lfloor\frac{j}{2}\rfloor}}{\zeta-1} - 1 \right)\\
        &= 2\sqrt{a} \left( \dfrac{\left(\zeta^{\lfloor\frac{j}{2}\rfloor + 1} - \zeta^{-\lfloor\frac{j}{2}\rfloor} \right)\left(\frac{1}{\zeta}-1\right)}{(\zeta-1)\left(\frac{1}{\zeta}-1\right)} -1 \right) .
    \end{align*}
    By taking into consideration that
    \begin{align*}
        \left(\zeta^{\lfloor\frac{j}{2}\rfloor + 1} - \zeta^{-\lfloor\frac{j}{2}\rfloor} \right)\left(\frac{1}{\zeta}-1\right) &= \zeta^{\lfloor\frac{j}{2}\rfloor}+\zeta^{-\lfloor\frac{j}{2}\rfloor} - \zeta^{\lfloor\frac{j}{2}\rfloor+1} - \zeta^{-\lfloor\frac{j}{2}\rfloor-1}\\
        &= 2\cos\left( \frac{\lfloor\frac{j}{2}\rfloor}{j+1}\pi\right)-2\cos\left( \frac{\lfloor\frac{j}{2}\rfloor+1}{j+1}\pi\right)
    \end{align*}
    and
    \begin{align*}
        (\zeta-1)\left(\frac{1}{\zeta}-1\right) = 2 - \left(\zeta + \frac{1}{\zeta}\right) = 2 - 2 \cos\left(\frac{1}{j+1}\pi\right) ,
    \end{align*}
    we conclude that
    \begin{align}
        \nonumber\Psi(E_j(x, a)) &= 2\sqrt{a} \left( \frac{2\cos\left( \frac{\lfloor\frac{j}{2}\rfloor}{j+1}\pi\right)-2\cos\left( \frac{\lfloor\frac{j}{2}\rfloor+1}{j+1}\pi\right)}{2 - 2 \cos\left(\frac{1}{j+1}\pi\right)}-1 \right)\\
        \label{psi_derivation} &= 2\sqrt{a} \left( \frac{\cos\left( \frac{\lfloor\frac{j}{2}\rfloor}{j+1}\pi\right)-\cos\left( \frac{\lfloor\frac{j}{2}\rfloor+1}{j+1}\pi\right)}{1 - \cos\left(\frac{1}{j+1}\pi\right)}-1 \right) .
    \end{align}
    If $j$ is odd, then $\lfloor\frac{j}{2}\rfloor = \frac{j-1}{2}$ and $\frac{\lfloor\frac{j}{2}\rfloor+1}{j+1}\pi=\frac{\pi}{2}$, which transforms Eq.\ (\ref{psi_derivation}) into
    \begin{align*}
        \Psi(E_j(x, a)) &= 2\sqrt{a} \left( \frac{\cos\left( \frac{j-1}{j+1}\cdot\frac{\pi}{2}\right)}{1 - \cos\left(\frac{1}{j+1}\pi\right)}-1 \right)\\
        &= 2\sqrt{a} \left( \frac{\sin\left( \frac{2}{j+1}\cdot\frac{\pi}{2}\right)}{2 \sin^2\left(\frac{\pi}{2j+2}\right)}-1 \right)\\
        &= 2\sqrt{a} \left( \frac{\sin\left( \frac{\pi}{j+1}\right)}{2 \sin^2\left(\frac{\pi}{2j+2}\right)}-1 \right)\\
        &= 2\sqrt{a} \left( \frac{2\sin\left( \frac{\pi}{2j+2}\right)\cos\left( \frac{\pi}{2j+2}\right)}{2 \sin^2\left(\frac{\pi}{2j+2}\right)}-1 \right)\\
        &= 2\sqrt{a} \left( \cot\left( \frac{\pi}{2j+2} \right) - 1 \right) .
    \end{align*}
    If $j$ is even, then $\lfloor\frac{j}{2}\rfloor = \frac{j}{2}$, as well as $\cos\left( \frac{\lfloor\frac{j}{2}\rfloor+1}{j+1}\pi\right) = - \cos\left( \frac{\lfloor\frac{j}{2}\rfloor}{j+1}\pi\right)$, which gives
    \begin{align*}
        \Psi(E_j(x, a)) &= 2\sqrt{a} \left( \frac{2\cos\left( \frac{j}{j+1} \cdot \frac{\pi}{2}\right)}{1 - \cos\left(\frac{1}{j+1}\pi\right)}-1 \right)\\
        &= 2\sqrt{a} \left( \frac{2\sin\left( \frac{1}{j+1} \cdot \frac{\pi}{2}\right)}{2 \sin^2\left(\frac{\pi}{2j+2}\right)}-1 \right)\\
        &= 2\sqrt{a} \left( \csc\left( \frac{\pi}{2j+2} \right)-1 \right) .
    \end{align*}
\end{proof}

We are now able to implement Lemma \ref{psi_lemma} in order to finish the computation of $\mathcal{E}(\mathcal{B}_{d, k})$.

\bigskip
\noindent
{\em Proof of Theorem \ref{bethe_main}}.\quad First of all, from Eq.\ (\ref{bethe_p}) it is clear that $P(\mathcal{B}_{2, k}, x) = E_k(x, 1)$ for every $k \ge 1$. This means that $\mathcal{E}(\mathcal{B}_{2, k}) = \Psi(E_k(x, 1))$. Hence, Eq.\ (\ref{bethe_energy_2}) follows immediately from Lemma \ref{psi_lemma}.

Now, we will suppose that $d \ge 3$ and prove Eq.\ (\ref{bethe_energy_1}). From Eq.\ (\ref{bethe_p}), we obtain
\begin{align*}
    \mathcal{E}(\mathcal{B}_{d, k}) &= \Psi(E_k(x, d-1)) + \sum_{j = 1}^{k-1} (d-2)(d-1)^{k-1-j} \Psi(E_j(x, d-1)) .
\end{align*}
However,
\begin{align*}
    \sum_{j = 1}^{k-1} &(d-2)(d-1)^{k-1-j} \Psi(E_j(x, d-1)) = \\
    &= \sum_{j = 1}^{k-1} ((d-1)^{k-j} - (d-1)^{k-1-j}) \Psi(E_j(x, d-1))\\
    &= \sum_{j = 1}^{k-1} (d-1)^{k-j} \Psi(E_j(x, d-1)) - \sum_{j = 1}^{k-1} (d-1)^{k-1-j} \Psi(E_j(x, d-1))\\
    &= \sum_{j = 0}^{k-2} (d-1)^{k-1-j} \Psi(E_{j+1}(x, d-1)) - \sum_{j = 1}^{k-1} (d-1)^{k-1-j} \Psi(E_j(x, d-1)) .
\end{align*}
Given the fact that
\begin{align*}
    \Psi(E_k(x, d-1)) &+ \sum_{j = 0}^{k-2} (d-1)^{k-1-j} \Psi(E_{j+1}(x, d-1)) =\\
    &= \sum_{j = 0}^{k-1} (d-1)^{k-1-j} \Psi(E_{j+1}(x, d-1)) ,
\end{align*}
we further obtain
\begin{align*}
    \mathcal{E}(\mathcal{B}_{d, k}) &= \sum_{j = 0}^{k-1} (d-1)^{k-1-j} \Psi(E_{j+1}(x, d-1)) - \sum_{j = 1}^{k-1} (d-1)^{k-1-j} \Psi(E_j(x, d-1)) .
\end{align*}
Also, we know that $E_1(x, d-1) = x$, hence $\Psi(E_1(x, d-1)) = 0$, which gives us
\begin{align}
    \nonumber\mathcal{E}(\mathcal{B}_{d, k}) &= \sum_{j = 1}^{k-1} (d-1)^{k-1-j} \Psi(E_{j+1}(x, d-1)) - \sum_{j = 1}^{k-1} (d-1)^{k-1-j} \Psi(E_j(x, d-1))\\
    \label{energy_derivation} &= \sum_{j = 1}^{k-1} (d-1)^{k-1-j} \left( \Psi(E_{j+1}(x, d-1)) - \Psi(E_j(x, d-1)) \right) .
\end{align}
Taking into consideration Eq.\ (\ref{energy_derivation}), it becomes apparent that in order to prove Eq.~(\ref{bethe_energy_1}), it is sufficient to show that 
\begin{align*}
    \Psi(E_{j+1}(x, d-1)) - \Psi(E_j(x, d-1)) &= f_j \, \sqrt{d-1} \, ,
\end{align*}
for all the $1 \le j \le k-1$. However, this is straightforward to do with the help of Lemma \ref{psi_lemma}. If $j$ is odd, then
\begin{align*}
    \Psi(&E_{j+1}(x, d-1)) - \Psi(E_j(x, d-1)) = \\
    &= 2 \sqrt{d-1} \left( \csc\left( \dfrac{\pi}{2j+4} \right) - 1 \right) - 2 \sqrt{d-1} \left( \cot\left( \dfrac{\pi}{2j+2} \right) - 1 \right) \\
    &= 2 \sqrt{d-1} \left( \csc\left( \dfrac{\pi}{2j+4} \right) - \cot\left( \dfrac{\pi}{2j+2} \right) \right)\\
    &= f_j \, \sqrt{d-1} \, .
\end{align*}
On the other hand, if $j$ is even, we get
\begin{align*}
    \Psi(&E_{j+1}(x, d-1)) - \Psi(E_j(x, d-1)) = \\
    &= 2 \sqrt{d-1} \left( \cot\left( \dfrac{\pi}{2j+4} \right) - 1 \right) - 2 \sqrt{d-1} \left( \csc\left( \dfrac{\pi}{2j+2} \right) - 1 \right) \\
    &= 2 \sqrt{d-1} \left( \cot\left( \dfrac{\pi}{2j+4} \right) - \csc\left( \dfrac{\pi}{2j+2} \right) \right)\\
    &= f_j \, \sqrt{d-1} \, .
\end{align*}
\qed

\subsection{Spectral properties of anti-factorial trees}\label{subsc_antifactorial}

The anti-factorial trees represent another class of rooted trees which have a highly regular structure, as seen by their definition in Section \ref{sc_introduction}. This makes it possible to implement Theorem \ref{second_th} and Corollary \ref{phi_cor} in order to determine their characteric polynomial $P(\mathcal{A}_k, x)$ and set of distinct eigenvalues $\sigma^{*}(\mathcal{A}_k)$, in a similar manner as done so in the previous subsection. Let $H_0(x), H_1(x), H_2(x), \ldots$ be the Hermite polynomials, which represent a classical orthogonal polynomial sequence. It is known (see, for example, \cite[pp.\ 105--106]{szego}) that one of the ways to define this sequence is by using the recurrence relation
\begin{align*}
    H_0(x) &= 1 ,\\
    H_1(x) &= 2x ,\\
    H_j(x) &= 2x H_{j-1}(x) - 2(j-1) H_{j-2}(x) \qquad (\forall j \ge 2) .
\end{align*}
Here, it is important to note that by denoting $He_j(x) = 2^{- \frac{j}{2}} H_j\left(\frac{x}{\sqrt{2}}\right)$ for all of the $j \in \mathbb{N}_0$, we obtain an orthogonal polynomial sequence $He_0(x), He_1(x), He_2(x), \ldots$ whose members are also sometimes called the Hermite polynomials. It is not difficult to see that this sequence can alternatively be defined via the recurrence relation
\begin{align}\label{hermite_rec}
\begin{split}
    He_0(x) &= 1 ,\\
    He_1(x) &= x ,\\
    He_j(x) &= x He_{j-1}(x) - (j-1) He_{j-2}(x) \qquad (\forall j \ge 2) .
\end{split}
\end{align}

If we apply Theorem \ref{second_th} on the balanced tree $T = \mathcal{A}_k$, we get that the recurrence relation Eq.~(\ref{second_th_rec}) defining $W_0, W_1, \ldots, W_{l(T)}$ becomes equivalent to the recurrence relation Eq.~(\ref{hermite_rec}), due to the fact that $c_{l(T)+1-j} = k - (k+1-j) = j-1$, as needed. This observation directly gives us
\begin{align}\label{hermite_realization}
    W_j = He_j(x) \qquad (\forall j = \overline{0, l(T)}) ,
\end{align}
We are now in the position to quickly prove the following theorem.

\begin{theorem}
    For any anti-factorial tree $\mathcal{A}_k$, where $k \ge 1$, we have
    \begin{align}\label{antifactorial_p}
        P(\mathcal{A}_k, x) &= He_k(x) \prod_{j = 2}^{k-1} He_j(x)^{\frac{(j-1)(k-1)!}{j!}} .
    \end{align}
\end{theorem}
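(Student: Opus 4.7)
The plan is to derive Eq.~(\ref{antifactorial_p}) as a direct consequence of Eq.~(\ref{p_balanced}) applied to $T = \mathcal{A}_k$, using the identification of the $W_j$ sequence with the Hermite polynomials supplied by Eq.~(\ref{hermite_realization}). Since $l(\mathcal{A}_k) = k$, Eq.~(\ref{p_balanced}) immediately takes the shape
\begin{align*}
    P(\mathcal{A}_k, x) &= \prod_{j = 1}^{k} He_j(x)^{n(\mathcal{A}_k, k + 1 - j) - n(\mathcal{A}_k, k - j)} ,
\end{align*}
so the whole task reduces to computing the level sizes $n(\mathcal{A}_k, j)$ and then matching the exponents to those in Eq.~(\ref{antifactorial_p}).

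First I would determine $n(\mathcal{A}_k, j)$ for every $1 \le j \le k$. Since each vertex on level $i$ has $k - i$ children, a trivial induction on $j$ yields
\begin{align*}
    n(\mathcal{A}_k, j) &= \prod_{i = 1}^{j - 1} (k - i) = \frac{(k-1)!}{(k-j)!} .
\end{align*}
In particular, $n(\mathcal{A}_k, 1) = 1$ and $n(\mathcal{A}_k, k) = n(\mathcal{A}_k, k-1) = (k-1)!$.

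Next I would feed these values into the exponent $n(\mathcal{A}_k, k + 1 - j) - n(\mathcal{A}_k, k - j)$ appearing in Eq.~(\ref{p_balanced}) and split the resulting product into three parts. For $j = k$ one obtains $n(\mathcal{A}_k, 1) - n(\mathcal{A}_k, 0) = 1$, which produces the leading factor $He_k(x)$. For $j = 1$ the difference $n(\mathcal{A}_k, k) - n(\mathcal{A}_k, k - 1) = (k-1)! - (k-1)! = 0$ vanishes, so $He_1(x)$ does not appear. Finally, for $2 \le j \le k - 1$, a short computation gives
\begin{align*}
    n(\mathcal{A}_k, k+1-j) - n(\mathcal{A}_k, k-j)
    &= \frac{(k-1)!}{(j-1)!} - \frac{(k-1)!}{j!}
    = \frac{(j-1)(k-1)!}{j!} ,
\end{align*}
which matches the exponent in Eq.~(\ref{antifactorial_p}) exactly.

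There is essentially no genuine obstacle here; the content of the theorem has already been packaged into Theorem~\ref{second_th} and Eq.~(\ref{hermite_realization}), so the proof is just bookkeeping. The only care needed is in the boundary indices, to ensure that the factor $He_1(x)$ (with exponent zero) is correctly omitted and that the $j = k$ term is isolated as the single $He_k(x)$ in front of the product, which is why I would treat these two cases separately rather than rolling them into the main product.
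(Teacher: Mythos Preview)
Your proposal is correct and follows essentially the same approach as the paper: both apply Eq.~(\ref{p_balanced}) together with the identification Eq.~(\ref{hermite_realization}), compute $n(\mathcal{A}_k, j) = (k-1)!/(k-j)!$, and then verify that the resulting exponents match those in Eq.~(\ref{antifactorial_p}). The only cosmetic difference is that you single out the $j=1$ case explicitly to explain why $He_1(x)$ disappears, whereas the paper treats all $1 \le j \le k-1$ uniformly (the $j=1$ exponent being zero automatically).
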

\begin{proof}
    By comparing Eq.\ (\ref{antifactorial_p}) to Eq.\ (\ref{p_balanced}), as well as using Eq.\ (\ref{hermite_realization}) together with $k = l(T)$, we conclude that proving the theorem statement gets down to showing that $n(T, 1) - n(T, 0) = 1$ and
    \begin{align*}
        n(T, k + 1 - j) - n(T, k - j) &= \frac{(j-1)(k-1)!}{j!} ,
    \end{align*}
    for all the $1 \le j \le k-1$. The former equality is clear, while the latter is easy to prove via simple mathematical calculation, if we take into consideration that
    \begin{align*}
        n(T, j) &= (k-1)(k-2) \cdots (k + 1 - j) = \dfrac{(k-1)!}{(k-j)!} ,
    \end{align*}
    for each $1 \le j \le k$. From here, we further obtain
    \begin{align*}
        n(T, k + 1 - j) - n(T, k - j) &= \dfrac{(k-1)!}{(j-1)!} - \dfrac{(k-1)!}{j!}\\
        &= j \dfrac{(k-1)!}{j!} - \dfrac{(k-1)!}{j!}\\
        &= \dfrac{(j-1)(k-1)!}{(k-j)!} ,
    \end{align*}
    for all the $1 \le j \le k-1$, which completes the proof.
\end{proof}

Similarly as done so in the previous subsection, we can apply Corollary \ref{phi_cor} in order to determine $\sigma^{*}(\mathcal{A}_k)$, as shown in the following corollary.

\begin{corollary}
    For any anti-factorial tree $\mathcal{A}_k$, where $k \ge 2$, we have
    \begin{align}\label{sigma_antifact_2}
        \sigma^{*}(\mathcal{A}_k) &= \bigcup_{j=2}^{k} \{ x \in \mathbb{R} \colon He_j(x) = 0 \} .
    \end{align}
    Also, we have
    \begin{align}\label{sigma_antifact_1}
        \sigma^{*}(\mathcal{A}_1) &= \{ 0 \} .
    \end{align}
\end{corollary}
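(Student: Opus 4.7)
The plan is to apply Corollary~\ref{phi_cor} directly to the anti-factorial tree $T = \mathcal{A}_k$ and then substitute the Hermite realization given in Eq.~(\ref{hermite_realization}). The $k=1$ case is handled separately as a trivial observation: $\mathcal{A}_1$ is a single isolated vertex, so its adjacency matrix is the $1\times 1$ zero matrix, giving $\sigma^*(\mathcal{A}_1) = \{0\}$, which proves Eq.~(\ref{sigma_antifact_1}).

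For $k \ge 2$, the first step is to determine the index set $\Phi$ explicitly. By definition, $\mathcal{A}_k$ has $c_j = k - j$ for $1 \le j \le k$, and $l(T) = k$. Hence the condition $c_{l(T)-j} > 1$ from Corollary~\ref{phi_cor} becomes $c_{k-j} = j > 1$, i.e.\ $j \ge 2$. Combined with the unconditional inclusion of $j = l(T) = k$, this yields
\[
    \Phi = \{k\} \cup \{j \in \mathbb{N} \colon 2 \le j \le k-1\} = \{2, 3, \ldots, k\}.
\]

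The second step is to replace the $W_j$ polynomials by Hermite polynomials using Eq.~(\ref{hermite_realization}), which was already established by matching the recurrence of Theorem~\ref{second_th} applied to $\mathcal{A}_k$ against Eq.~(\ref{hermite_rec}). Plugging $W_j = He_j(x)$ and the computed $\Phi$ into Eq.~(\ref{phi_formula}) immediately gives Eq.~(\ref{sigma_antifact_2}).

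There is really no hard step here; the only point requiring a small amount of care is the off-by-one bookkeeping in translating $c_{l(T)-j} > 1$ into a constraint on $j$, and verifying that $j=1$ must indeed be excluded (which is consistent with $He_1(x) = x$ producing the eigenvalue $0$ that is already contributed by the leaves via $He_2(x)$ having $0$ as a root only when $j$ is even — though even if $0$ were not produced elsewhere, the corollary correctly excludes $j=1$ because $c_{k-1} = 1$ means the factor $W_1 = x$ does not appear in the product representation of $P(\mathcal{A}_k, x)$ from Eq.~(\ref{p_balanced})). With that verification, both equations in the statement follow.
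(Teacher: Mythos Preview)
Your proof is correct and follows exactly the same route as the paper: handle $k=1$ trivially, then for $k\ge 2$ compute $\Phi=\{2,\ldots,k\}$ from Corollary~\ref{phi_cor} via $c_{k-j}=j$, and substitute the Hermite realization Eq.~(\ref{hermite_realization}) into Eq.~(\ref{phi_formula}). The parenthetical aside about $He_2$ and the eigenvalue $0$ is muddled (in fact $He_j(0)=0$ iff $j$ is odd, so $He_2(0)=-1$), but as you yourself observe it is irrelevant to the argument, since the exclusion of $j=1$ is already dictated by $c_{k-1}=1$ and Corollary~\ref{phi_cor}.
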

\begin{proof}
    Eq.\ (\ref{sigma_antifact_1}) is obtained immediately by using the fact that the anti-factorial tree $\mathcal{A}_1$ is actually just a trivial tree. On the other hand, Eq.\ (\ref{sigma_antifact_2}) follows directly by applying Corollary \ref{phi_cor}, given the fact that $c_j > 1$ for all the $1 \le j \le k-2$, while $c_{k-1} = 1$, provided $k \ge 2$. This implies that whenever $k \ge 2$, we have $\Phi = \{ 2, 3, \ldots, k-1, k \}$. Therefore, Eq.~(\ref{phi_formula}) and Eq.\ (\ref{hermite_realization}) together imply Eq.~(\ref{sigma_antifact_2}).
\end{proof}

\section{Tree merging procedure}\label{sc_merging}

In this final section, we will demonstrate a tree merging technique which preserves the spectra of all of its input trees. To be more precise, this procedure takes a finite sequence of rooted trees $T_1, T_2, \ldots, T_k$, and outputs a newly formed rooted tree $T_0$ such that its spectrum $\sigma(T_0)$ satisfies
\begin{align}\label{preserving_formula}
    \sigma(T_0) \supseteq \bigcup_{j = 1}^{k} \sigma(T_j) .
\end{align}
If we incorporate the notation $\mu(T, \lambda)$ to denote the multiplicity of some real number $\lambda \in \mathbb{R}$ as an eigenvalue of a tree $T$, we conclude that the condition given in Eq.\ (\ref{preserving_formula}) swiftly becomes equivalent to
\begin{align}\label{preserving_formula_2}
    \mu(T_0, \lambda) \ge \sum_{j=1}^{k} \mu(T_j, \lambda) \qquad (\forall \lambda \in \mathbb{R}) .
\end{align}
It is interesting to note that not only does a tree construction method which satisfies Eq.\ (\ref{preserving_formula}) exist, but there exists one whose steps can be fairly easily elaborated. Our key result regarding this matter is presented in the following theorem.

\begin{theorem}\label{tree_construction_th}
    Let $T_1, T_2, \ldots, T_k$ be an arbitrary sequence of rooted trees. Also, let $\alpha_1, \alpha_2, \ldots, \alpha_k \in \mathbb{N}$ be any sequence of positive integers. If we use $T_0$ to denote a rooted tree such that
    \begin{itemize}
        \item the root has exactly $\displaystyle\sum_{j=1}^{k} \alpha_j$ children;
        \item the rooted subtrees corresponding to the first $\alpha_1$ root children are all isomorphic to $T_1$, then the rooted subtrees corresponding to the following $\alpha_2$ root children are all isomorphic to $T_2$, and so on;
    \end{itemize}
    we then have
    \begin{align*}
        \mu(T_0, \lambda) \ge \sum_{j=1}^{k} (\alpha_j - 1) \mu(T_j, \lambda) \qquad (\forall \lambda \in \mathbb{R}) .
    \end{align*}
\end{theorem}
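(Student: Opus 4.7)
The plan is to apply Corollary~\ref{adj_cor} directly to $T_0$. Because the assigned rational function $\mathcal{G}(v,x)$ at a vertex depends only on the subtree rooted at that vertex, the rational functions on each of the $\alpha_j$ copies of $T_j$ inside $T_0$ coincide with those on the stand-alone tree $T_j$. Partitioning $V(T_0)$ into the root $r(T_0)$ and the vertices of the $\sum_j \alpha_j$ rooted subtrees hanging off of it, Corollary~\ref{adj_cor} immediately gives
\begin{align*}
    P(T_0, x) = \mathcal{G}(r(T_0), x) \cdot \prod_{j=1}^{k} P(T_j, x)^{\alpha_j}.
\end{align*}

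Next, I would invoke Lemma~\ref{lemma_start} to write $\mathcal{G}(r(T_j), x) = P(T_j, x)/S_j(x)$, where $S_j \in \mathbb{Z}[x]$ is the monic denominator (the $\mathcal{F}_2$ part at the root of $T_j$, i.e., the product of the $\mathcal{F}_1$ polynomials over the children of $r(T_j)$). The recursion \eqref{adj_cor_rec} applied at the new root then reads
\begin{align*}
    \mathcal{G}(r(T_0), x) = x - \sum_{j=1}^{k} \frac{\alpha_j\, S_j(x)}{P(T_j, x)},
\end{align*}
and multiplying through by $\prod_j P(T_j, x)^{\alpha_j}$ yields the polynomial identity
\begin{align*}
    P(T_0, x) = x \prod_{j=1}^{k} P(T_j, x)^{\alpha_j} - \sum_{j=1}^{k} \alpha_j\, S_j(x)\, P(T_j, x)^{\alpha_j - 1} \prod_{i \neq j} P(T_i, x)^{\alpha_i}.
\end{align*}

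Every summand on the right contains $\prod_{j=1}^{k} P(T_j, x)^{\alpha_j - 1}$ as an explicit factor, so this common factor divides $P(T_0, x)$ in $\mathbb{Z}[x]$. Consequently, for every $\lambda \in \mathbb{R}$, the polynomial $(x - \lambda)^{\sum_j (\alpha_j - 1)\mu(T_j, \lambda)}$ divides $P(T_0, x)$, which is precisely the claimed multiplicity bound. The main obstacle is ensuring the rational-function manipulation is algebraically legitimate and that no cancellation reduces the multiplicity at $\lambda$; this is handled cleanly by Lemma~\ref{lemma_start}, which guarantees invertibility of $\mathcal{G}(r(T_j),x)$ in $\mathbb{Z}(x)$ and the monic polynomial representation needed to extract the common factor \emph{before} any evaluation. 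Once this factorization is visible, the divisibility, and hence the multiplicity inequality, is immediate.
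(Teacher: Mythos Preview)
Your proof is correct and follows essentially the same route as the paper: both factor $P(T_0,x)=\mathcal{G}(r(T_0),x)\prod_j P(T_j,x)^{\alpha_j}$ via Corollary~\ref{adj_cor}, write $\mathcal{G}(r(T_j),x)=P(T_j,x)/S_j(x)$, expand the recursion at the new root, and read off the multiplicity bound. Two small remarks. First, the identity $\mathcal{G}(r(T_j),x)=P(T_j,x)/S_j(x)$ is not quite the content of Lemma~\ref{lemma_start} as stated (that lemma only produces \emph{some} monic $\mathcal{F}_1/\mathcal{F}_2$ with the right degrees, without identifying $\mathcal{F}_1$ as $P(T_j,x)$); what you need is precisely Lemma~\ref{tree_construction_lemma}, or equivalently your own first displayed equation applied to $T_j$ in place of $T_0$. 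Second, your final step---pulling the common factor $\prod_j P(T_j,x)^{\alpha_j-1}$ out of the explicit polynomial expansion---is slightly more direct than the paper's version, which instead keeps the rational form~\eqref{construction_final} and counts root multiplicities separately in the numerator and the denominator before subtracting.
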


If we apply Theorem \ref{tree_construction_th} on the sequence $\alpha_1 = \alpha_2 = \cdots = \alpha_k = 2$, we obtain the following corollary.
\begin{corollary}\label{tree_construction_cor}
    Let $T_1, T_2, \ldots, T_k$ be an arbitrary sequence of rooted trees. If we use $T_0$ to denote a rooted tree such that
    \begin{itemize}
        \item the root has exactly $2k$ children;
        \item the rooted subtrees corresponding to the first two root children are both isomorphic to $T_1$, then the rooted subtrees corresponding to the following two root children are both isomorphic to $T_2$, and so on;
    \end{itemize}
    we then have
    \begin{align*}
        \mu(T_0, \lambda) \ge \sum_{j=1}^{k} \mu(T_j, \lambda) \qquad (\forall \lambda \in \mathbb{R}) .
    \end{align*}
\end{corollary}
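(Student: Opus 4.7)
The corollary is the specialization $\alpha_1=\cdots=\alpha_k=2$ of Theorem \ref{tree_construction_th}, so my plan is to prove the theorem, from which the stated corollary is immediate. The natural tool is Corollary \ref{adj_cor}, whose use is the main theme of this section.

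First, I would record a convenient identity that is implicit in Lemma \ref{lemma_start}: for every rooted tree $T$, one has $P(T,x)=\mathcal{F}_1(r(T),x)$. Indeed, Corollary \ref{adj_cor} gives $P(T,x)=\prod_{v\in V(T)}\mathcal{F}_1(v,x)/\mathcal{F}_2(v,x)$, and the relation $\mathcal{F}_2(v,x)=\prod_{w\in c(v)}\mathcal{F}_1(w,x)$ established inside the proof of Lemma \ref{lemma_start} makes every $\mathcal{F}_1(w,x)$ with $w\neq r(T)$ cancel once from numerator and once from denominator, leaving only $\mathcal{F}_1(r(T),x)$.

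Next, let $r$ denote the root of $T_0$. Since Eq.~(\ref{adj_cor_rec}) is fully bottom-up, the rational function assigned to any vertex depends only on the rooted subtree hanging at it. In particular, each of the $\alpha_j$ root-children that heads an isomorphic copy of $T_j$ carries the same assigned function as $r(T_j)$ does in $T_j$, and the product of the $\mathcal{G}$-values over such a copy equals $P(T_j,x)$ by Corollary \ref{adj_cor} applied to $T_j$ alone. Splitting the full vertex product of $T_0$ into the root $r$ and the $\sum_j\alpha_j$ attached copies gives
\[
P(T_0,x)\;=\;\mathcal{G}(r,x)\cdot\prod_{j=1}^{k}P(T_j,x)^{\alpha_j}.
\]
Writing $\mathcal{G}(r(T_j),x)=P(T_j,x)/\mathcal{F}_2(r(T_j),x)$ via the preliminary identity and expanding the recursion at $r$ yields
\[
\mathcal{G}(r,x)\;=\;x-\sum_{j=1}^{k}\frac{\alpha_j\,\mathcal{F}_2(r(T_j),x)}{P(T_j,x)}\;=\;\frac{R(x)}{\prod_{j=1}^{k}P(T_j,x)}
\]
for some $R(x)\in\mathbb{Z}[x]$, obtained by putting the sum over a common denominator.

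Substituting back produces the factorization
\[
P(T_0,x)\;=\;R(x)\,\prod_{j=1}^{k}P(T_j,x)^{\alpha_j-1},
\]
and comparing multiplicities of any $\lambda\in\mathbb{R}$ on both sides yields the claimed bound $\mu(T_0,\lambda)\ge\sum_{j=1}^{k}(\alpha_j-1)\mu(T_j,\lambda)$; setting $\alpha_j=2$ then gives Corollary \ref{tree_construction_cor}. The only conceptual hurdle is the preliminary identity $P(T,x)=\mathcal{F}_1(r(T),x)$; once that is in hand, everything else is the sort of routine algebraic bookkeeping for which Corollary \ref{adj_cor} is tailored.
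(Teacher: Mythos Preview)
Your proposal is correct and follows essentially the same route as the paper: derive Theorem~\ref{tree_construction_th} from Corollary~\ref{adj_cor} via the identity $P(T_0,x)=\mathcal{G}(r,x)\prod_j P(T_j,x)^{\alpha_j}$ together with a representation $\mathcal{G}(r(T_j),x)=P(T_j,x)/\mathcal{G}_2(r(T_j),x)$, then specialize to $\alpha_j=2$. The only cosmetic differences are that the paper packages the preliminary identity as a separate lemma (Lemma~\ref{tree_construction_lemma}) proved directly rather than by your telescoping argument through Lemma~\ref{lemma_start}, and that the paper carries the final expression as a quotient and compares root-degrees in numerator and denominator, whereas you clear denominators to reach the polynomial factorization $P(T_0,x)=R(x)\prod_j P(T_j,x)^{\alpha_j-1}$ before reading off multiplicities.
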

Due to the equivalence of Eq.\ (\ref{preserving_formula_2}) and Eq.\ (\ref{preserving_formula}), Corollary \ref{tree_construction_cor} clearly displays a tree merging procedure which preserves the spectra of the input trees, i.e.\ it constructs an output tree $T_0$ which satisfies Eq.\ (\ref{preserving_formula}), given a finite sequence of input trees $T_1, T_2, \ldots, T_k$.

In the remainder of the paper, we shall provide a full proof of Theorem \ref{tree_construction_th}. We begin with an auxiliary lemma that describes a connection between $\mu(T, \lambda)$ and $\mathcal{G}(r(T), x)$, for an arbitrary rooted tree $T$ and each real number $\lambda \in \mathbb{R}$. The lemma is stated as follows.
\begin{lemma}\label{tree_construction_lemma}
    For any given rooted tree $T$, the assigned rational function $\mathcal{G}(r(T), x)$ from Corollary \ref{adj_cor} corresponding to the root can be represented as a fraction of $\mathbb{Z}[x]$ polynomials
    \begin{align*}
        \mathcal{G}(r(T), x) = \frac{\mathcal{G}_1(r(T), x)}{\mathcal{G}_2(r(T), x)} ,
    \end{align*}
    so that for any real number $\lambda \in \mathbb{R}$, the degree of $\lambda$ as a root of $\mathcal{G}_1(r(T), x)$ is equal to $\mu(T, \lambda)$.
\end{lemma}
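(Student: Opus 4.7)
The plan is to reuse the explicit polynomial representation of the assigned rational functions that was implicitly constructed in the proof of Lemma \ref{lemma_start}, specialized to $\beta \equiv 0$ (which is exactly the setup of Corollary \ref{adj_cor}). That proof produces, by induction on levels, concrete $\mathcal{G}_1(v, x), \mathcal{G}_2(v, x) \in \mathbb{Z}[x]$ with $\mathcal{G}(v, x) = \mathcal{G}_1(v, x)/\mathcal{G}_2(v, x)$; crucially, the denominator satisfies the bottom-up identity
\begin{align*}
    \mathcal{G}_2(v, x) &= \prod_{w \in c(v)} \mathcal{G}_1(w, x)
\end{align*}
(with the empty product $1$ at leaves, so $\mathcal{G}_1(v, x) = x$ and $\mathcal{G}_2(v, x) = 1$ there). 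I would adopt precisely this representation at the root.

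The heart of the argument is a telescoping/accounting cancellation in $\mathbb{Z}(x)$. Starting from Corollary \ref{adj_cor}, I would write
\begin{align*}
    P(T, x) \;=\; \prod_{v \in V(T)} \mathcal{G}(v, x) \;=\; \frac{\displaystyle\prod_{v \in V(T)} \mathcal{G}_1(v, x)}{\displaystyle\prod_{v \in V(T)} \prod_{w \in c(v)} \mathcal{G}_1(w, x)} .
\end{align*}
The double product in the denominator equals $\prod_{w \neq r(T)} \mathcal{G}_1(w, x)$: each non-root vertex $w$ has exactly one parent and hence is counted exactly once, whereas $r(T)$ is a child of no vertex and so does not appear at all. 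Cancelling the common factor $\prod_{w \neq r(T)} \mathcal{G}_1(w, x)$ from numerator and denominator leaves precisely $\mathcal{G}_1(r(T), x)$.

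Combining the two conclusions gives the key polynomial identity $P(T, x) = \mathcal{G}_1(r(T), x)$ in $\mathbb{Z}[x]$. Since $\mu(T, \lambda)$ is by definition the multiplicity of $\lambda$ as a root of $P(T, x)$, it is also the multiplicity of $\lambda$ as a root of $\mathcal{G}_1(r(T), x)$, finishing the proof.

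I do not foresee any serious obstacle: the argument is fundamentally a bookkeeping observation about how many times each vertex contributes to the numerator versus the denominator. The only mild point to be careful about is ensuring that the identity $\prod_v \mathcal{G}_2(v, x) = \prod_{w \neq r(T)} \mathcal{G}_1(w, x)$ holds as an equality of polynomials (not merely up to units or sign), which is immediate from the explicit bottom-up formulas provided by Lemma \ref{lemma_start}.
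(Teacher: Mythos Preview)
Your argument is correct, but it takes a different route from the paper's. The paper simply splits the product in Eq.~(\ref{adj_cor_res}) one level deep: writing $z_1,\ldots,z_d$ for the children of $r(T)$ and $T_1,\ldots,T_d$ for the corresponding rooted subtrees, Corollary~\ref{adj_cor} applied to each $T_j$ gives
\[
P(T,x)=\mathcal{G}(r(T),x)\prod_{j=1}^{d}P(T_j,x),
\]
whereupon the paper just \emph{defines} $\mathcal{G}_1(r(T),x)=P(T,x)$ and $\mathcal{G}_2(r(T),x)=\prod_j P(T_j,x)$; the multiplicity claim is then immediate. Your approach instead reaches back to the explicit numerator/denominator construction inside the proof of Lemma~\ref{lemma_start} and performs a global telescoping over \emph{all} vertices, using that every non-root appears exactly once as a child. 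Both arrive at the identity $\mathcal{G}_1(r(T),x)=P(T,x)$. The paper's version is shorter and needs nothing beyond Corollary~\ref{adj_cor} itself; your version has the mild bonus of identifying the particular numerator built in Lemma~\ref{lemma_start} with $P(T,x)$ (and, by the same telescoping restricted to a subtree, $\mathcal{G}_1(v,x)=P(T_v,x)$ for every vertex $v$), at the cost of tracking that explicit construction.
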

\begin{proof}
    We will denote the children of $r(T)$ by $z_1, z_2, \ldots, z_d$, where $d \in \mathbb{N}_0$ is the degree of $r(T)$. We will also signify the rooted subtrees corresponding to these children via $T_1, T_2, \ldots, T_d$, respectively. By applying Corollary \ref{adj_cor}, it is easy to transform Eq.\ (\ref{adj_cor_res}) in order to get
    \begin{align*}
        P(T, x) &= \prod_{v \in V(T)} \mathcal{G}(v, x) \\
        &= \mathcal{G}(r(T), x) \prod_{j = 1}^{d} \left( \prod_{v \in V(T_j)} \mathcal{G}(v, x) \right)\\
        &= \mathcal{G}(r(T), x) \prod_{j = 1}^{d} P(T_j, x) .
    \end{align*}
    From here, we further obtain
    \begin{align*}
        \mathcal{G}(r(T), x) &= \dfrac{P(T, x)}{\displaystyle\prod_{j = 1}^{d} P(T_j, x)} .
    \end{align*}
    If we write
    \begin{align*}
        \mathcal{G}_1(r(T), x) &= P(T, x) ,\\
        \mathcal{G}_2(r(T), x) &= \prod_{j = 1}^{d} P(T_j, x) ,
    \end{align*}
    then $\mathcal{G}_1(r(T), x), \mathcal{G}_2(r(T), x) \in \mathbb{Z}[x]$ and the lemma statement follows directly by taking into consideration that the degree of $\lambda$ as a root of $P(T, x)$ coincides with the value $\mu(T, \lambda)$, for each real number $\lambda \in \mathbb{R}$.
\end{proof}

We are now able to complete the proof of Theorem \ref{tree_construction_th}.

\bigskip
\noindent
{\em Proof of Theorem \ref{tree_construction_th}}.\quad In an analogous manner as done so in the proof of Lemma~\ref{tree_construction_lemma}, we can apply Corollary \ref{adj_cor} and transform Eq.\ (\ref{adj_cor_res}) accordingly in order to obtain
\begin{align}\label{construction_p}
    P(T_0, x) &= \mathcal{G}(r(T), x) \prod_{j = 1}^{k} P(T_j, x)^{\alpha_j} .
\end{align}
From the very construction of the rooted tree $T_0$, it is also straightforward to see that
\begin{align}\label{root_g}
    \mathcal{G}(r(T), x) &= x - \sum_{j = 1}^{k} \dfrac{\alpha_j}{\mathcal{G}(r(T_j), x)} .
\end{align}
Here, we shall apply Lemma \ref{tree_construction_lemma} on each rooted tree from the sequence $T_1, T_2, \ldots, T_k$ in order to get a system of fractional representations
\begin{alignat}{2}\label{frac_impl}
    \mathcal{G}(r(T_j), x) &= \dfrac{\mathcal{G}_1(r(T_j), x)}{\mathcal{G}_2(r(T_j), x)} && \qquad (\forall j = \overline{1, k}) ,\\
    \nonumber \mathcal{G}_1(r(T_j), x), \mathcal{G}_2(r(T_j), x) &\in \mathbb{Z}[x] && \qquad (\forall j = \overline{1, k}),
\end{alignat}
for which the degree of $\lambda$ as a root of $\mathcal{G}_1(r(T_j), x)$ is equal to $\mu(T_j, \lambda)$, for each real number $\lambda \in \mathbb{R}$ and all the $1 \le j \le k$.

By plugging in Eq.\ (\ref{frac_impl}) into Eq.\ (\ref{root_g}), we conclude that
\begin{align}
    \nonumber \mathcal{G}(r(T), x) &= x - \sum_{j = 1}^{k} \dfrac{\alpha_j \ \mathcal{G}_2(r(T_j), x)}{\mathcal{G}_1(r(T_j), x)}\\
    \nonumber &= \dfrac{x \displaystyle\prod_{j=1}^{k} \mathcal{G}_1(r(T_j), x) - \displaystyle\sum_{j=1}^{k} \left( \alpha_j \ \mathcal{G}_2(r(T_j), x) \displaystyle\prod_{i = 1, i \neq j}^{k} \mathcal{G}_1(r(T_i), x) \right)} {\displaystyle\prod_{j=1}^{k} \mathcal{G}_1(r(T_j), x) }\\
    \label{root_g_2} &= \dfrac{\mathcal{G}_1(r(T), x)}{\displaystyle\prod_{j=1}^{k} \mathcal{G}_1(r(T_j), x)} ,
\end{align}
where
\begin{align*}
    \mathcal{G}_1(r(T), x) &= x \displaystyle\prod_{j=1}^{k} \mathcal{G}_1(r(T_j), x) - \displaystyle\sum_{j=1}^{k} \left( \alpha_j \ \mathcal{G}_2(r(T_j), x) \displaystyle\prod_{i = 1, i \neq j}^{k} \mathcal{G}_1(r(T_i), x) \right) .
\end{align*}
We can now use Eq.\ (\ref{root_g_2}) together with Eq.\ (\ref{construction_p}) in order to get
\begin{align}\label{construction_final}
    P(T_0, x) &= \dfrac{\mathcal{G}_1(r(T), x) \displaystyle\prod_{j = 1}^{k} P(T_j, x)^{\alpha_j}}{\displaystyle\prod_{j=1}^{k} \mathcal{G}_1(r(T_j), x)} .
\end{align}

Let $\lambda \in \mathbb{R}$ be an arbitrarily chosen real number. Its degree as a root of each $P(T_j, x)$ is clearly equal to $\mu(T_j, \lambda)$, for each $1 \le j \le k$. This means that the degree of $\lambda$ as a root of the polynomial $\displaystyle\prod_{j = 1}^{k} P(T_j, x)^{\alpha_j}$ must be equal to $\displaystyle\sum_{j = 1}^{k} \alpha_j \ \mu(T_j, \lambda)$. Therefore, the degree of $\lambda$ as a root of the polynomial \begin{equation*}
    \mathcal{G}_1(r(T), x) \displaystyle\prod_{j = 1}^{k} P(T_j, x)^{\alpha_j}
\end{equation*}
must be greater than or equal to $\displaystyle\sum_{j = 1}^{k} \alpha_j \ \mu(T_j, \lambda)$. On the other hand, we know that the degree of $\lambda$ as a root of $\mathcal{G}_1(r(T_j), x)$ must be equal to $\mu(T_j, \lambda)$, for each $1 \le j \le k$. Consequently, the degree of $\lambda$ as a root of the polynomial
\begin{equation*}
    \displaystyle\prod_{j=1}^{k} \mathcal{G}_1(r(T_j), x)
\end{equation*}
must be equal to $\displaystyle\sum_{j = 1}^{k} \mu(T_j, \lambda)$. By taking into consideration Eq.\ (\ref{construction_final}), we see that the degree of $\lambda$ as a root of the polynomial $P(T_0, x)$ must be at least
\begin{align*}
    \sum_{j = 1}^{k} \alpha_j \ \mu(T_j, \lambda) - \sum_{j = 1}^{k} \mu(T_j, \lambda) &= \sum_{j = 1}^{k} (\alpha_j - 1) \mu(T_j, \lambda) .
\end{align*}
This observation immediately gives us
\begin{align*}
    \mu(T_0, \lambda) \ge \sum_{j=1}^{k} (\alpha_j - 1) \mu(T_j, \lambda)
\end{align*}
for every real number $\lambda \in \mathbb{R}$. \qed

\bigskip
\begin{remark}
    Theorem \ref{tree_construction_th} can be proved in an alternative manner by taking into consideration the eigenvectors of the given rooted trees $T_1, T_2, \ldots, T_k$ corresponding to the eigenvalue $\lambda \in \mathbb{R}$. Let $u_1$ be such an eigenvector of $T_1$ and let $T_1^{(1)}, T_1^{(2)}, \ldots T_1^{(\alpha_1)}$ be the $\alpha_j$ rooted subtrees of $T_0$ which correspond to the root children and are isomorphic to $T_1$. For each $2 \le j \le \alpha_1$, we can construct an eigenvector $u_1^{(j)}$ of $T_0$ in the following way:
    \begin{itemize}
        \item each element of $u_1^{(j)}$ corresponding to a vertex from $T_1^{(1)}$ should be equal to the corresponding element of $u_1$;
        \item each element of $u_1^{(j)}$ corresponding to a vertex from $T_1^{(j)}$ should be equal to the additive inverse of the corresponding element of $u_1$;
        \item all the other elements of $u_1^{(j)}$ should be equal to zero.
    \end{itemize}
    
    This means that each eigenvector $u_1$ of $T_1$ corresponding to the eigenvalue $\lambda$ spawns $\alpha_1 - 1$ eigenvectors of $T_0$ corresponding to the same eigenvalue $\lambda$. If we apply the aforementioned construction on $\mu(T_1, \lambda)$ linearly independent eigenvectors of $T_1$, we reach $(\alpha_1 - 1) \mu (T_1, \lambda)$ eigenvectors of $T_0$, which can be shown to be linearly independent via simple computation.
    
    Finally, by implementing the given construction on an arbitrarily chosen set of $\mu(T_j, \lambda)$ linearly independent eigenvectors of the rooted tree $T_j$, for each $1 \le j \le k$, we get $(\alpha_j - 1) \mu (T_j, \lambda)$ linearly independent eigenvectors, for all the $1 \le j \le k$. In total, we obtain $\displaystyle\sum_{j=1}^{k} (\alpha_j - 1) \mu(T_j, \lambda)$ eigenvectors of $T_0$ corresponding to the chosen eigenvalue $\lambda \in \mathbb{R}$. It is not difficult to check that these eigenvectors form a linearly independent set as well, which promptly concludes the alternative proof of Theorem \ref{tree_construction_th}.
\end{remark}

\section*{Acknowledgements}

We are grateful to Dragan Stevanovi\'c for providing the idea behind the alternative proof of Theorem \ref{tree_construction_th}.

\section*{Conflict of interest}

The author declares that he has no conflict of interest.

\end{document}